\def\Z{{\mathbb Z}}
\def\Q{{\mathbb Q}}
\def\R{{\mathbb R}}
\def\C{{\mathbb C}}
\def\F{{\mathbb F}}
\def\calI{{\mathcal I}}
\def\br{{\mathbf r}}
\def\bx{{\mathbf x}}
\def\bh{{\mathbf h}}
\def\Ghat{\widehat{G}}
\def\Xhat{{\widehat{X}}}
\def\trace{\mathrm{trace}}
\newcommand\Aut{\operatorname{Aut}}
\newcommand\Err{\operatorname{Err}}
\newtheorem{theorem}{Theorem}[section]
\newtheorem{lemma}[theorem]{Lemma}
\newtheorem{proposition}[theorem]{Proposition}
\newtheorem{corollary}[theorem]{Corollary}
\newtheorem{bigtheorem}{Theorem}
\newtheorem{bigcorollary}[bigtheorem]{Corollary}
\theoremstyle{definition}
\newtheorem{definition}[theorem]{Definition}
\newtheorem{example}[theorem]{Example}
\theoremstyle{remark}
\newtheorem{remark}[theorem]{Remark}
\begin{document}

\title[Integration over finite groups and difference sets]
{Approximation of integration
over finite groups, difference sets and association schemes}

\author{Hiroki Kajiura}
\address{JSPS Research Fellow.
Department of Mathematics\\ Graduate School of Science\\
Hiroshima University, 739-8526 Japan}
\email{hikajiura@hiroshima-u.ac.jp}

\author{Makoto Matsumoto}
\address{Mathematics Program \\ 
Graduate School of Advanced Science and Engineering\\
Hiroshima University, 739-8526 Japan}
\email{m-mat@math.sci.hiroshima-u.ac.jp}

\author{Takayuki Okuda}
\address{Mathematics Program \\ 
Graduate School of Advanced Science and Engineering\\
Hiroshima University, 739-8526 Japan}
\email{okudatak@hiroshima-u.ac.jp}

\keywords{Difference set, Association scheme, 
Group, quasi-Monte Carlo method, Pre-difference set}
\thanks{
The first author is partially supported
by JSPS Grant-in-Aid for JSPS Fellows Grant Number JP19J21207,
the second author by JSPS
Grants-in-Aid for Scientific Research
JP26310211 and JP18K03213, 
and the third author
by JP16K17594, JP16K05132, JP16K13749,
and JP26287012.
}

\subjclass[2010]{05B10 Difference sets,
05E30 Association schemes, strongly regular graphs 
20D60 Arithmetic and combinatorial problems,
65C05 Monte Carlo methods, 65D30 Numerical integration,
65D32 Quadrature and cubature formulas.
}
\date{\today}

\begin{abstract}
Let $G$ be a finite group and $f:G \to \C$ be a function.
For a non-empty finite subset $Y\subset G$, 
let $I_Y(f)$ denote the average of $f$ over $Y$. Then,
$I_G(f)$ is the average of $f$ over $G$. 
Using the decomposition of $f$ into irreducible components 
of $\C^G$ as a representation of $G\times G$,
we define
non-negative real numbers $V(f)$ and $D(Y)$, each depending
only on $f$, $Y$, respectively, such that
an inequality of the form $|I_G(f)-I_Y(f)|\leq V(f)\cdot D(Y)$
holds. We give a lower bound of $D(Y)$ depending only
on $\#Y$ and $\#G$. We show that the lower bound is achieved
if and only if 
$\#\{(x,y)\in Y^2 \mid x^{-1}y \in [a]\}/\#[a]$
is independent of the choice of the conjugacy class $[a]\subset G$
for $a \neq 1$. We call such a $Y\subset G$ as a pre-difference set
in $G$, since the  
condition is satisfied if $Y$ is a difference set. If $G$ is abelian,
the condition is equivalent to that $Y$ is a difference set.
We found a non-trivial pre-difference set in the dihedral group 
of order 16, where no non-trivial difference set exists.
The pre-difference sets in non-abelian groups of order 16
are classified.
A generalization to commutative association schemes is 
also given.
\end{abstract}

\maketitle

\section{Introduction and main results}\label{sec:introduction}
Let $X$ be a non-empty finite set, and $Y$ be a non-empty subset of $X$.
We denote by $\C^X$ the space of functions from $X$
to $\C$.
For $f \in \C^X$, its integration $I(f)$ over $X$ is defined
as $\frac{1}{\#X}\sum_{x\in X} f(x)$. We use the term ``integration''
because of a similarity to the theory of quasi-Monte Carlo
integration, stated later.
Similarly, the integration $I_Y(f)$ over $Y$ is defined
as $\frac{1}{\#Y}\sum_{x\in Y} f(x)$. 
We would like to find a finite subset so that the 
absolute integration
error
$|I(f)-I_Y(f)|$ is small, for $f$ being in
some function space $F \subset \C^X$.
This is an analogue of quasi-Monte Carlo (QMC) methods
in approximating the integration, where
$X$ is a hyper cube $[0,1]^s$ and the integration 
of $f:X \to \R$ is with respect to the Lebesgue measure. 
A large amount of studies exist, see for example
\cite{DICK-KUO-SLOAN} \cite{DICK-PILL-BOOK} \cite{niederreiter:book}.
Some recent researches would be found 
in the conference books from international conferences titled 
``Monte Carlo and quasi Monte Carlo methods''
held every other year. For the readers' convenience,
we briefly explain a typical QMC and its relation with
group characters in Appendix~\ref{sec:appendix}.

In this manuscript, we consider the case where $X$ is
a finite group $G$, and in Section~\ref{sec:as}
a generalization where $X$ has 
a structure of commutative association scheme.

Here, $\C^X$ is equipped with a standard inner product
and a norm: 
$$
\left< f, g \right>:=\sum_{x\in X} f(x)\overline{g(x)}, \quad
||f|| := \sqrt{\left<f , f \right>}.
$$

When $G$ is a finite group, 
$\C^G$ is a left $G$-module by defining the action $g \in G$ on $f(-)\in \C^G$
by $g(f(-))=f(g^{-1}(-))$.
Then $\C^G$ has an orthogonal decomposition 
$\C^G=\oplus_{\rho \in \Ghat} V_\rho$,
where $\Ghat$ is the set of isomorphism classes of 
irreducible characters and $V_\rho$ is the 
submodule isomorphic to the direct sum of the
$\dim \rho$ copies of the representation
$\rho$. Hence, any $f\in \C^G$
is decomposed as $f=\oplus_{\rho \in \Ghat}f_\rho$,
and $f_\rho$ is called the $\rho$-component of $f$

We define non-negative real numbers $\partial_\rho(Y)$
(which is $||\calI_Y^\rho||$ defined in Proposition~\ref{prop:rho-comp2})
such that $|I(f_\rho)-I_Y(f_\rho)|\leq ||f_\rho||\partial_\rho(Y)$ holds,
which is sharp since there is an $f$ with equality 
holds for all $\rho$ (see Remark~\ref{rem:equality} below).

\begin{bigtheorem}\label{Th:main1}
Let $G$ be a finite group, $Y$ a nonempty subset of $G$,
and $f:G \to \C$ a function. Let $I(f)$ be the
average of $f$ over $G$, $I_Y(f)$ the average of $f$
over $Y$. Let $f_\rho$ be the $\rho$-component of 
$f$, where $\rho$ is an irreducible representation of $G$, 
namely, $\rho \in \Ghat$.
Define 
$$
 \partial_\rho(Y):=
 \sqrt{\frac{\dim \rho}{\#Y^2\# G} \sum_{x,y\in Y}\chi_\rho(x^{-1}y)},
$$
where $\chi_\rho$ is the character of $\rho$.
Then we have
$|I(f_\rho)-I_Y(f_\rho)| \leq ||f_\rho||\partial_\rho(Y)$.
For the trivial representation $\rho=1_G$, we have
$|I(f_{1_G})-I_Y(f_{1_G})|=0$.
\end{bigtheorem}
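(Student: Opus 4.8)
The plan is to write the integration error as a pairing against one fixed element of $\C^G$, restrict that pairing to the isotypic summand $V_\rho$, and then conclude with the Cauchy--Schwarz inequality; the entire content of the statement then reduces to evaluating one norm, which is exactly the computation recorded in Proposition~\ref{prop:rho-comp}.

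Concretely, I would set $u:=\tfrac{1}{\#G}\mathbf{1}-\tfrac{1}{\#Y}\mathbf{1}_Y\in\C^G$, where $\mathbf{1}$ is the constant function $1$ on $G$ and $\mathbf{1}_Y$ is the indicator function of $Y$. Since $u$ is real valued and $\langle f,\mathbf{1}\rangle=\#G\cdot I(f)$, $\langle f,\mathbf{1}_Y\rangle=\#Y\cdot I_Y(f)$, one has $\langle f,u\rangle=I(f)-I_Y(f)$ for every $f\in\C^G$. Applying this to $f_\rho$ and using that $\C^G=\bigoplus_{\rho\in\Ghat}V_\rho$ is an orthogonal decomposition with $f_\rho\in V_\rho$, I get $I(f_\rho)-I_Y(f_\rho)=\langle f_\rho,u\rangle=\langle f_\rho,u_\rho\rangle$, where $u_\rho:=P_\rho(u)$ and $P_\rho\colon\C^G\to V_\rho$ is the orthogonal projector. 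Cauchy--Schwarz then gives $|I(f_\rho)-I_Y(f_\rho)|\le\|f_\rho\|\,\|u_\rho\|$, so it remains to prove $\|u_\rho\|=\partial_\rho(Y)$ for $\rho\ne 1_G$, and $u_{1_G}=0$.

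For the trivial representation, $V_{1_G}$ is the line of constant functions and $P_{1_G}$ replaces a function by its $G$-average; hence $P_{1_G}(\mathbf{1})=\mathbf{1}$ and $P_{1_G}(\mathbf{1}_Y)=\tfrac{\#Y}{\#G}\mathbf{1}$, so $u_{1_G}=0$ and $I(f_{1_G})-I_Y(f_{1_G})=0$. For $\rho\ne 1_G$, since $\mathbf{1}\in V_{1_G}$ we have $P_\rho(\mathbf{1})=0$, so $u_\rho=-\tfrac{1}{\#Y}(\mathbf{1}_Y)_\rho$ and the remaining task is to compute $\|(\mathbf{1}_Y)_\rho\|$. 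For this I would use that the left regular representation is unitary for $\langle\cdot,\cdot\rangle$, so $P_\rho$ is a self-adjoint idempotent given by the standard isotypic projector $P_\rho=\tfrac{\dim\rho}{\#G}\sum_{g\in G}\overline{\chi_\rho(g)}\,L_g$, where $(L_g h)(x)=h(g^{-1}x)$. Expanding $\mathbf{1}_Y=\sum_{y\in Y}\delta_y$ and computing $P_\rho(\delta_y)(x)=\tfrac{\dim\rho}{\#G}\overline{\chi_\rho(xy^{-1})}$, the identity $\|(\mathbf{1}_Y)_\rho\|^2=\langle\mathbf{1}_Y,P_\rho\mathbf{1}_Y\rangle$ yields $\|(\mathbf{1}_Y)_\rho\|^2=\tfrac{\dim\rho}{\#G}\sum_{x,y\in Y}\chi_\rho(xy^{-1})$. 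Since $\chi_\rho$ is a class function, $\chi_\rho(yx^{-1})=\chi_\rho(x^{-1}y)$, so swapping the summation variables $x\leftrightarrow y$ in this symmetric double sum rewrites it as $\sum_{x,y\in Y}\chi_\rho(x^{-1}y)$; therefore $\|(\mathbf{1}_Y)_\rho\|=\#Y\cdot\partial_\rho(Y)$, i.e.\ $\|u_\rho\|=\partial_\rho(Y)$, and the Cauchy--Schwarz bound completes the proof.

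The one place I expect any real friction is the last computation: making sure $P_\rho$ is exactly the stated operator and is self-adjoint for the particular left action $g\cdot f=f(g^{-1}\,\cdot)$ in use here (so that $\|(\mathbf{1}_Y)_\rho\|^2$ genuinely equals $\langle\mathbf{1}_Y,P_\rho\mathbf{1}_Y\rangle$), and then the short class-function manipulation turning $\chi_\rho(xy^{-1})$ into $\chi_\rho(x^{-1}y)$ under the double sum over $Y\times Y$. Past the reduction to a norm, the proof is a single use of Cauchy--Schwarz.
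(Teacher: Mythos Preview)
Your argument is correct and is essentially the paper's own proof: in the paper's notation your $u$ is $\calI_G-\calI_Y$, your $\mathbf{1}_Y$ is $\delta_Y$, your $P_\rho$ is left multiplication by $E_\rho$, and the paper likewise reduces the per-$\rho$ bound to Cauchy--Schwarz followed by the norm computation $||\calI_Y^\rho||^2=\frac{\dim\rho}{\#Y^2\#G}\sum_{x,y\in Y}\chi_\rho(x^{-1}y)$. The only cosmetic difference is that the paper records the identity $\langle \calI_Y^\rho,\calI_Y^\rho\rangle=\langle \calI_Y^\rho,\calI_Y\rangle=I_Y(\calI_Y^\rho)$ and then evaluates pointwise, whereas you compute $\langle \mathbf{1}_Y,P_\rho\mathbf{1}_Y\rangle$ directly; the two computations are line-for-line equivalent.
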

A proof is given in Section~\ref{sec:proof-th1}.
\begin{bigcorollary}\label{cor:weight}
We have 
$$
|I(f)-I_Y(f)| \leq 
\sum_{\rho \in \Ghat \setminus\{1_G\}}||f_\rho||\partial_\rho(Y)
\leq  V(f)D(Y),
$$
where
$V(f):=\sum_{\rho\in \Ghat \setminus\{1_G\}}
\left(||f_\rho||\dim \rho\right)$
and  
$D(Y):=\max_{\rho \in \Ghat\setminus \{1_G\}} 
\frac{\partial_\rho(Y)}{\dim \rho}$.
\end{bigcorollary}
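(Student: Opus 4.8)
The plan is to reduce everything to Theorem~\ref{Th:main1} by linearity and the triangle inequality. First I would note that both $I$ and $I_Y$ are $\C$-linear functionals on $\C^G$, and that the orthogonal decomposition $f=\oplus_{\rho\in\Ghat} f_\rho$ of Section~\ref{sec:introduction} is in particular a (finite) direct-sum decomposition, so that $I(f)=\sum_{\rho\in\Ghat} I(f_\rho)$ and $I_Y(f)=\sum_{\rho\in\Ghat} I_Y(f_\rho)$. Subtracting and applying the triangle inequality gives
$$
|I(f)-I_Y(f)| \;\le\; \sum_{\rho\in\Ghat} |I(f_\rho)-I_Y(f_\rho)|.
$$

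Next I would dispose of the trivial representation: by the last assertion of Theorem~\ref{Th:main1}, the summand for $\rho=1_G$ vanishes, so the sum may be taken over $\Ghat\setminus\{1_G\}$. For each remaining $\rho$, the bound $|I(f_\rho)-I_Y(f_\rho)|\le \|f_\rho\|\,\partial_\rho(Y)$ of Theorem~\ref{Th:main1} applies verbatim, yielding the first inequality
$$
|I(f)-I_Y(f)| \;\le\; \sum_{\rho\in\Ghat\setminus\{1_G\}} \|f_\rho\|\,\partial_\rho(Y).
$$

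For the second inequality I would simply rewrite each term as $\|f_\rho\|\,\partial_\rho(Y)=\bigl(\|f_\rho\|\dim\rho\bigr)\cdot\dfrac{\partial_\rho(Y)}{\dim\rho}$ and bound the last factor by $D(Y)=\max_{\rho\neq 1_G}\partial_\rho(Y)/\dim\rho$, which is legitimate since $\partial_\rho(Y)\ge 0$ and $\|f_\rho\|\dim\rho\ge 0$. Pulling $D(Y)$ out of the sum gives $\sum_{\rho\neq 1_G}\|f_\rho\|\,\partial_\rho(Y)\le D(Y)\sum_{\rho\neq 1_G}\|f_\rho\|\dim\rho = V(f)\,D(Y)$, as claimed. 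There is essentially no obstacle here; the only point that needs a word of justification is that the decomposition $f=\oplus f_\rho$ is compatible with the linear functionals $I$ and $I_Y$, which is immediate since it is a decomposition of $f$ as a sum of elements of $\C^G$.
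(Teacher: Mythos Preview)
Your proof is correct and follows essentially the same route as the paper's own argument: linearity of $I$ and $I_Y$, triangle inequality, removal of the $\rho=1_G$ term via Theorem~\ref{Th:main1}, application of the bound $|I(f_\rho)-I_Y(f_\rho)|\le \|f_\rho\|\partial_\rho(Y)$, and then the factorization $\|f_\rho\|\partial_\rho(Y)=(\|f_\rho\|\dim\rho)\cdot\partial_\rho(Y)/\dim\rho$ together with bounding the second factor by $D(Y)$.
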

\begin{proof}
We have $I(f)=\sum_{\rho \in \Ghat} I(f_\rho)$.
The above theorem implies the second inequality
in
\begin{eqnarray*}
 |I(f)-I_Y(f)|
&\leq& \sum_{\rho}|I(f_\rho)-I_Y(f_\rho)|
 \leq \sum_{\rho \neq 1_G}||f_\rho||\partial_\rho(Y) \\
&=& \sum_{\rho \neq 1_G}||f_\rho|| \dim\rho
   \cdot \frac{\partial_\rho(Y)}{\dim\rho}
 \leq \left(\sum_{\rho \neq 1_G}||f_\rho||\dim\rho\right)
 D(Y).
\end{eqnarray*}
\end{proof}

Note that 
$V(f)$
depends only on $f$,
and $D(Y)$ depends only on $Y$. 
Corollary~\ref{cor:weight} implies
\begin{equation*}
 |I(f)-I_Y(f)| \leq V(f)D(Y).
\end{equation*}
Such type of error bounds on QMC-integration appears
in many researches; a famous example is
Koksma-Hlawka inequality \cite{niederreiter:book}.

It is easy to show that $D(G)=0$,
so this bound is tight in this sense.
Because of this inequality, we are interested in finding a $Y$
with fixed cardinality
which minimizes $D(Y)$. 
\begin{proposition}\label{prop:main2}
Let $D(Y)$ be the 
value defined in Corollary~\ref{cor:weight}.
Under the assumptions in Theorem~\ref{Th:main1},
a lower bound 
$$
D(Y) \geq 
\sqrt{\frac{1/\#Y-1/\#G}{\#G-1}}
$$
holds.
The equality holds 
if and only if 
$\frac{\partial_\rho(Y)}{\dim \rho} 
= \sqrt{\frac{1/\#Y-1/\#G}{\#G-1}}$
holds for any $\rho \neq 1_G$.
\end{proposition}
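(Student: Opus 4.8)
The plan is to derive the bound from Proposition~\ref{prop:square} together with the elementary fact that a maximum dominates a weighted average, the weights here being the squares of the dimensions. First I would record the standard ``sum of squares of dimensions'' identity $\sum_{\rho \in \Ghat}(\dim\rho)^2 = \#G$, which gives $\sum_{\rho \neq 1_G}(\dim\rho)^2 = \#G - 1$ after removing the trivial representation.

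Write $D := D(Y) = \max_{\rho \neq 1_G}\partial_\rho(Y)/\dim\rho$, a maximum over the finite nonempty set $\Ghat\setminus\{1_G\}$ (we may assume $\#G>1$, as otherwise the asserted bound is vacuous). By definition of the maximum, $\partial_\rho(Y) \leq D\cdot\dim\rho$, hence $\partial_\rho(Y)^2 \leq D^2(\dim\rho)^2$, for every $\rho\neq 1_G$; recall that each $\partial_\rho(Y)$ is non-negative, as noted before Theorem~\ref{Th:main1}. Summing these inequalities over $\rho\neq 1_G$ and inserting Proposition~\ref{prop:square} on the left and the dimension identity on the right yields
\[
\frac{1}{\#Y}-\frac{1}{\#G} \;=\; \sum_{\rho\neq 1_G}\partial_\rho(Y)^2 \;\leq\; D^2\sum_{\rho\neq 1_G}(\dim\rho)^2 \;=\; D^2(\#G-1),
\]
and dividing by $\#G-1$ and taking square roots gives the claimed lower bound on $D(Y)$.

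For the equality clause I would argue as follows. The only inequalities used above are the termwise estimates $\partial_\rho(Y)^2 \leq D^2(\dim\rho)^2$; since all summands are non-negative, the displayed chain is an equality if and only if each of these termwise estimates is an equality, i.e.\ $\partial_\rho(Y)/\dim\rho = D$ for all $\rho\neq 1_G$. Taking square roots in the now-saturated chain then evaluates the common value as $D = \sqrt{(1/\#Y-1/\#G)/(\#G-1)}$, which is exactly the stated condition. Conversely, if $\partial_\rho(Y)/\dim\rho$ equals that constant for every $\rho\neq 1_G$, then $D(Y)$, being the maximum of a family of equal numbers, equals the same constant, so the lower bound is attained.

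Because this is essentially a one-line Cauchy--Schwarz-style manipulation built on Proposition~\ref{prop:square} and a textbook character identity, I do not foresee a real obstacle. The only points needing a moment's care are the legitimacy of working with the maximum (it is attained, the index set being finite and nonempty once $\#G>1$) and tracking the equality case carefully enough to see that ``every termwise inequality is tight'' is both necessary and sufficient — both of which are straightforward.
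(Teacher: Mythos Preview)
Your proof is correct and is essentially the same as the paper's: both combine Proposition~\ref{prop:square} with $\sum_{\rho\neq 1_G}(\dim\rho)^2=\#G-1$ and the elementary fact that a maximum dominates a weighted average (the paper isolates this last step as a standalone lemma, which you carry out inline). The equality analysis also matches.
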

A proof is given in Section~\ref{sec:main3}.
Thus,
$D(Y)$ is bounded below by 
a formula depending only on $\#G$ and $\#Y$.
We are interested in the case where the equality holds.
If $G$ is an abelian group, then the equality 
holds
if and only if $Y$ is a difference set\cite{difference-set}, as shown
in Theorem~\ref{Th:main2}.
\begin{definition}
Let $G$ be a finite group, and $Y$ its subset.
Define
$\lambda_a:=\#\{(x,y) \in Y\times Y \mid x^{-1}y = a\}$,
where $a\in G$.
A non-empty proper subset $Y$ of $G$ is
a $(v,k,\lambda)$-difference set if
$v=\#G, k=\#Y$, and $\lambda_a=\lambda$
for any $a \in G$ except $a=1$.
Note that $\lambda_1=\#Y$. 
A difference set $Y$ is said to be trivial
if $\#(Y)=1, v-1$.
\end{definition}

We define the notion of pre-difference set.
We could not find this notion in the literatures. 
\begin{definition}\label{def:pre}(pre-difference set)

Let $G$ be a finite group, and $Y$ its non-empty
subset.
Define $\lambda_{[a]}:=\#\{(x,y) \in Y\times Y \mid x^{-1}y \in [a]\}$,
for the conjugacy class $[a]\subset G$ of $a\in G$.
Then $Y$ is a $(v,k,\lambda)$ pre-difference set if
$v=\#G, k=\#Y$, and $\lambda_{[a]}=\lambda\#[a]$
for any $a \in G$ except $a=1$.
Note that $\lambda_{[1]}=\#Y$. 
A pre-difference set $Y$ is said to be trivial
if $\#(Y)=1, v-1, v$.
\end{definition}

If $Y$ is a difference set, then it is a pre-difference
set. The converse is not true, because there is a non-trivial 
pre-difference set in the dihedral group of order 16,
see Section~\ref{sec:pre}. The parameter $\lambda$ for
pre-difference set appears to be a rational number,
but we shall show that it is an integer in Section~\ref{sec:pre}.

\begin{remark}
The set $Y=G$ is not considered as a difference set,
but is considered as a pre-difference set. 
This is because in the context of the integration,
$Y=G$ is the best choice and we don't want to exclude.
\end{remark}

\begin{bigtheorem}\label{Th:main3}
Let $G$ be a finite group and $Y$ its non-empty subset.
Then, the following conditions
are equivalent.
\begin{enumerate}
\item $Y$ is a pre-difference set, i.e.,
$\lambda_{[a]}/\#[a]$ is independent of the choice 
of $a \neq 1$.
\item
$\lambda_{[a]}/\#[a]=(\#Y^2-\#Y)/(\#G-1)$
holds for any $a \neq 1$, and $\lambda_{[1]}=\#Y$ holds.
\item
$
 \partial_\rho(Y)/(\dim \rho)=
 \sqrt{\frac{1/\#Y-1/\#G}{\#G-1}}
$
holds for any $\rho \neq 1_G$, and
$\partial_{1_G}(Y)=\sqrt{\frac{1}{\#G}}$ holds.
\item 
$\partial_\rho(Y)/(\dim \rho)$ is independent of 
the choice of $\rho \in \Ghat\setminus\{1_G\}$.
\item 
The equality holds in the inequality
$$
D(Y)
\geq 
\sqrt{\frac{1/\#Y-1/\#G}{\#G-1}}
$$
in Proposition~\ref{prop:main2}.
\end{enumerate}
\end{bigtheorem}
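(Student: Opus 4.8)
The plan is to run the equivalences around the cycle (1)$\Leftrightarrow$(2)$\Leftrightarrow$(3)$\Leftrightarrow$(4)$\Leftrightarrow$(5), after first recording the single computation that bridges the ``conjugacy-class side'' (conditions (1),(2)) and the ``representation side'' (conditions (3),(4),(5)). Write $\lambda_C:=\lambda_{[a]}=\sum_{a'\in C}\lambda_{a'}$ for a conjugacy class $C=[a]$. Since $\chi_\rho$ is constant on conjugacy classes, the definition in Theorem~\ref{Th:main1} can be rewritten as
\[
\partial_\rho(Y)^2=\frac{\dim\rho}{\#Y^{2}\,\#G}\sum_{x,y\in Y}\chi_\rho(x^{-1}y)=\frac{\dim\rho}{\#Y^{2}\,\#G}\sum_{C}\lambda_C\,\chi_\rho(C),
\]
the sum over conjugacy classes $C$ of $G$. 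I will also use the bookkeeping identities $\lambda_{\{1\}}=\#Y$, $\sum_C\lambda_C=\#Y^{2}$, $\sum_C\#C=\#G$, $\sum_{\rho\in\Ghat}(\dim\rho)^{2}=\#G$, and, for $\rho\ne 1_G$, the vanishing $\sum_C\#C\,\chi_\rho(C)=\sum_{a\in G}\chi_\rho(a)=0$ coming from orthogonality of $\chi_\rho$ with the trivial character.

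Next I would clear away the easy equivalences. Since $\lambda_{[1]}=\#Y$ always holds, (1)$\Leftrightarrow$(2) reduces to showing that a constant value $\mu$ of $\lambda_{[a]}/\#[a]$ over $a\ne 1$ is forced to be $(\#Y^{2}-\#Y)/(\#G-1)$: sum $\lambda_C=\mu\#C$ over $C\ne\{1\}$ and invoke $\sum_{C\ne\{1\}}\lambda_C=\#Y^{2}-\#Y$ together with $\sum_{C\ne\{1\}}\#C=\#G-1$. The implication (3)$\Rightarrow$(4) is trivial; for (4)$\Rightarrow$(3), if $\partial_\rho(Y)/\dim\rho$ takes a common value $t$ over all $\rho\ne 1_G$, then Proposition~\ref{prop:square} and $\sum_{\rho\ne 1_G}(\dim\rho)^{2}=\#G-1$ give $t^{2}(\#G-1)=1/\#Y-1/\#G$, pinning $t$ down, while the clause $\partial_{1_G}(Y)^{2}=1/\#G$ is again Proposition~\ref{prop:square}. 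Finally (4)$\Leftrightarrow$(5) is exactly the equality case in Proposition~\ref{prop:main2}, read through $D(Y)=\max_{\rho\ne 1_G}\partial_\rho(Y)/\dim\rho$.

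The core is (2)$\Leftrightarrow$(3). For (2)$\Rightarrow$(3) I would substitute the pre-difference-set values $\lambda_{\{1\}}=\#Y$ and $\lambda_C=\mu\#C$ for $C\ne\{1\}$ into the displayed formula for $\partial_\rho(Y)^{2}$, peel off the $C=\{1\}$ term (where $\chi_\rho(1)=\dim\rho$), and use $\sum_{C\ne\{1\}}\#C\,\chi_\rho(C)=-\dim\rho$ for $\rho\ne 1_G$ to obtain
\[
\partial_\rho(Y)^{2}=\frac{(\dim\rho)^{2}}{\#Y^{2}\,\#G}\,(\#Y-\mu);
\]
substituting $\mu=(\#Y^{2}-\#Y)/(\#G-1)$ and simplifying turns the right side into $(\dim\rho)^{2}\cdot\frac{1/\#Y-1/\#G}{\#G-1}$, which is condition (3).

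The hard direction, which I expect to be the main obstacle, is (3)$\Rightarrow$(2), where one must invert the bridge formula. The key observation is that $(\lambda_C)_C\mapsto\bigl(\partial_\rho(Y)^{2}\bigr)_{\rho}$ defined by that formula is a $\C$-linear bijection between spaces of the same dimension $r=\#\{\text{conjugacy classes}\}=\#\Ghat$: the character table $(\chi_\rho(C))_{\rho,C}$ is an invertible square matrix by the row orthogonality relations, and the row rescalings $\dim\rho/(\#Y^{2}\#G)$ are nonzero. Hence the family $(\partial_\rho(Y)^{2})_\rho$ determines $(\lambda_C)_C$ uniquely. Condition (3) fixes every $\partial_\rho(Y)^{2}$; by the purely formal computation of (2)$\Rightarrow$(3) the vector with entry $\#Y$ at $C=\{1\}$ and $\mu\#C$ elsewhere is sent to exactly this family; so it is the unique preimage, forcing the genuine $(\lambda_C)_C$ of $Y$ to have these values, i.e.\ (2). (One may instead produce the inversion explicitly from column orthogonality $\sum_\rho\chi_\rho(C)\overline{\chi_\rho(C')}=\frac{\#G}{\#C}\delta_{C,C'}$ and check the resulting values.) Chaining (1)$\Leftrightarrow$(2), (2)$\Leftrightarrow$(3), (3)$\Leftrightarrow$(4), (4)$\Leftrightarrow$(5) finishes the proof.
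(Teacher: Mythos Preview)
Your proposal is correct and follows essentially the same route as the paper: both set up the linear bridge $(\lambda_{[a]}/\#[a])_{[a]}\mapsto((\partial_\rho(Y)/\dim\rho)^2)_\rho$ via the character table, use its invertibility for $(3)\Rightarrow(2)$, compute $(2)\Rightarrow(3)$ directly using $\sum_{a}\chi_\rho(a)=0$, and handle $(1)\Leftrightarrow(2)$ and $(3)\Leftrightarrow(4)$ by the counting identities together with Proposition~\ref{prop:square}. The only cosmetic difference is that the paper links (5) directly to (3) via Proposition~\ref{prop:main2}, whereas you phrase it as $(4)\Leftrightarrow(5)$; since you already have $(3)\Leftrightarrow(4)$, this is immaterial.
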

A proof is given in Section~\ref{sec:main3}.
Thus, when the cardinality of $Y$ is fixed,
$D(Y)$ attains the lower bound above if and only if 
$Y$ is a pre-difference set. It seems 
interesting that the optimal choice
for the Quasi-Monte Carlo in the above setting
is equivalent
to a natural generalization of the notion
of classical difference set.

%
%
%

If $G$ is abelian, then $[a]=\{a\}$,
$\lambda_{[a]}=\lambda_a$, and $\dim \rho=1$. 
This shows the following proposition.
\begin{proposition}
Let $G$ be an abelian group. 
A subset $Y\subset G$ is a difference set
if and only if $Y\neq G$ and $Y$ is a pre-difference set.
\end{proposition}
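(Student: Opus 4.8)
The plan is to unwind the two definitions in the abelian case and observe that they differ only in how they treat $Y = G$. First I would record the elementary fact that when $G$ is abelian every conjugacy class is a singleton, $[a] = \{a\}$, so that $\#[a] = 1$ and hence $\lambda_{[a]} = \lambda_a$ for every $a \in G$. Substituting this into Definition~\ref{def:pre}, the defining property of a $(v,k,\lambda)$ pre-difference set reduces to: $Y$ is non-empty, $v = \#G$, $k = \#Y$, and $\lambda_a = \lambda$ for all $a \neq 1$ (with $\lambda_1 = \#Y$ automatic). Comparing this with the definition of a $(v,k,\lambda)$-difference set, the only discrepancy is that the latter additionally demands that $Y$ be a \emph{proper} subset of $G$, i.e.\ $Y \neq G$, while the former does not.

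Next I would check the two implications against this reformulation. If $Y$ is a difference set, then in particular $Y$ is a non-empty subset with $\lambda_a$ independent of $a \neq 1$, hence a pre-difference set, and $Y \neq G$ by definition; this is one direction. Conversely, if $Y$ is a pre-difference set with $Y \neq G$, then $Y$ is a non-empty proper subset of $G$ with $\lambda_a$ constant on $a \neq 1$, which is precisely the definition of a difference set. The one point worth a sentence is that $Y = G$ genuinely satisfies the pre-difference condition — one computes $\lambda_a = \#G$ for all $a$, so the value is constant on $a \neq 1$ — which is exactly why the hypothesis $Y \neq G$ cannot be omitted, in accordance with the Remark following Definition~\ref{def:pre}.

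There is no real obstacle here: the statement is a definitional consequence of the abelian simplification $[a] = \{a\}$, $\dim\rho = 1$ already noted in the text (and is consistent with Theorem~\ref{Th:main3}(1)). The only care required is the bookkeeping of the degenerate cases — singletons, $\#Y = v-1$, and $Y = G$ — and in each of these the constant-$\lambda_a$ condition is visibly satisfied, so nothing goes wrong.
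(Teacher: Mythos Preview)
Your proposal is correct and follows exactly the paper's approach: the paper simply notes, immediately before the proposition, that for $G$ abelian one has $[a]=\{a\}$, $\lambda_{[a]}=\lambda_a$, and $\dim\rho=1$, and declares that this shows the proposition. Your write-up is a more detailed unpacking of that same one-line observation, including the check that $Y=G$ must be excluded.
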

Thus, the above theorem implies
the following theorem.
\begin{bigtheorem}\label{Th:main2}
Let $G$ be a finite abelian group, and $Y$ its 
non-empty subset.
Then the following conditions are equivalent.
\begin{enumerate}
\item $Y$ 
is a difference set or $Y=G$. 
\item $\partial_\rho(Y)$ is independent of 
the choice of $\rho \in \Ghat\setminus\{1_G\}$.
\item 
The equality holds in the inequality
$$
D(Y)
\geq 
\sqrt{\frac{1/\#Y-1/\#G}{\#G-1}}
$$
in Proposition~\ref{prop:main2}.
\end{enumerate}
\end{bigtheorem}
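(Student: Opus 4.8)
The plan is to deduce this theorem from Theorem~\ref{Th:main3} (which is proved earlier) by specializing to the abelian case, together with the displayed Proposition immediately preceding the statement. First I would recall that when $G$ is abelian every conjugacy class is a singleton, so $[a]=\{a\}$, $\#[a]=1$, and hence $\lambda_{[a]}=\lambda_a$; moreover every irreducible representation $\rho\in\Ghat$ satisfies $\dim\rho=1$. Under these identifications, condition (1) of Theorem~\ref{Th:main3} (that $Y$ is a pre-difference set) becomes the condition that $\lambda_a$ is independent of $a\neq 1$; by the preceding Proposition this is equivalent to saying that either $Y$ is a difference set or $Y=G$, which is exactly condition (1) of the present theorem.

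Next, since $\dim\rho=1$ for all $\rho$, the quantity $\partial_\rho(Y)/(\dim\rho)$ appearing in Theorem~\ref{Th:main3} coincides with $\partial_\rho(Y)$ itself. Therefore condition (4) of Theorem~\ref{Th:main3}, namely that $\partial_\rho(Y)/(\dim\rho)$ is independent of the choice of $\rho\in\Ghat\setminus\{1_G\}$, becomes precisely condition (2) of the present theorem: that $\partial_\rho(Y)$ is independent of $\rho\in\Ghat\setminus\{1_G\}$. Finally, condition (5) of Theorem~\ref{Th:main3}—equality in the lower bound of Proposition~\ref{prop:main2}—is stated in a form that does not mention $\dim\rho$ at all, so it transfers verbatim to condition (3) of the present theorem with no modification.

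Putting these three translations together: condition (1) here $\Leftrightarrow$ condition (1) of Theorem~\ref{Th:main3}, condition (2) here $\Leftrightarrow$ condition (4) of Theorem~\ref{Th:main3}, and condition (3) here $\Leftrightarrow$ condition (5) of Theorem~\ref{Th:main3}. Since (1), (4), (5) of Theorem~\ref{Th:main3} are already known to be equivalent (to each other and to (2), (3) of that theorem), the three conditions of the present theorem are equivalent. There is essentially no obstacle here beyond bookkeeping; the only point requiring a little care is the handling of $Y=G$ versus proper difference sets, which is cleanly isolated in the preceding Proposition, and the observation that the pre-difference-set definition was deliberately set up (see the Remark following Definition~\ref{def:pre}) to include $Y=G$ so that this clean statement holds.
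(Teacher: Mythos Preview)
Your proposal is correct and follows exactly the route the paper takes: the paper derives Theorem~\ref{Th:main2} as an immediate consequence of Theorem~\ref{Th:main3} via the preceding Proposition, noting (as you do) that for abelian $G$ one has $[a]=\{a\}$, $\lambda_{[a]}=\lambda_a$, and $\dim\rho=1$, so the conditions of the two theorems match up directly. Your write-up is in fact more explicit than the paper's, which simply says ``Thus, the above theorem implies the following theorem'' before stating Theorem~\ref{Th:main2} and then, at the end of the proof of Theorem~\ref{Th:main3}, ``(and thus Theorem~\ref{Th:main2})''.
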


In Section~{\ref{sec:as}}, we prove a theorem generalizing
Theorem~\ref{Th:main3} in the context of commutative association
schemes.
\section{Preliminary for the proofs and a proof of Theorem~\ref{Th:main1}}
\label{sec:proof-th1}
\begin{definition}\label{def:set}
For a finite set $X$, $\C^X$ denotes the set of functions
from $X$ to $\C$, and $\C[X]$ denotes
the linear vector space with basis $X$.
We identify $\C^X =\C[X]$ by $f \mapsto \sum_{x \in X} f(x)x$.
A standard Hermitian inner product and the norm are
defined as in Section~\ref{sec:introduction}.
\end{definition}

\begin{definition}\label{def:I}
For $x\in X$, an element $x \in \C[X]$ corresponds to
the delta function $\delta_x \in \C^X$, defined by
$\delta_x(y)=0$ if $x\neq y$ and $\delta_x(y)=1$ if $x=y$,
for $y \in X$. For a subset $Y$, we define
$\delta_Y=\sum_{y\in Y}\delta_y \in \C^X$.
Note that $\left<\delta_Y, \delta_Z\right>=\#(Y\cap Z)$.
In Section~\ref{sec:introduction}, we defined an operator
taking the average over $Y$:
$I_Y: \C^G \to \C$.
We define $\calI_Y:=\frac{1}{\#Y}\delta_Y \in \C^X$
representing the operator $I_Y$, i.e.,
$$
I_Y(f)=\left<f, \calI_Y\right>
$$
holds. By definition, we have
$$
\left<\calI_Y,\calI_Z\right>=\frac{1}{\#Y\#Z}\#(Y\cap Z).
$$
If $X$ is a group $G$, 
then for $\rho \in \widehat{G}$ (see below), we denote the $\rho$-component
of $\delta_y, \delta_Y, \calI_Y$ by
$\delta_y^\rho, \delta_Y^\rho, \calI_Y^\rho$, 
respectively.
\end{definition}


\begin{definition}
In the above definition, suppose that $G=X$ is a group.
Then, $\C[G]$ is a group ring. By left multiplication,
$\C[G]$ is a left $G$-module.
Let $\Ghat$ denote the set of isomorphism classes of
irreducible representations of $G$.
The character of $\rho$ is denoted by $\chi_\rho$.
Normalized inner product is defined for $\C^G$ (and hence
for $\C[G]$) by 
$$
(f,g)_G:=\frac{1}{\#G}\sum_{x\in G}f(x)\overline{g(x)}.
$$
(This inner product is used only in Proposition~\ref{prop:group}
for stating orthonomality of the characters.)
Let $C(G)$ be the set of conjugacy class, and for
$a\in G$, $[a]\in C(G)$ denotes the class that contains $a$.
The linear subspace $\C^{C(G)}$ of $\C^G=\C[G]$ 
is the center of $\C[G]$.

\end{definition}
The following are well-known,
see for example Serre \cite[I, \S2]{serre}.
\begin{proposition}\label{prop:group}
\begin{enumerate} 
\item $\{\chi_\rho \mid \rho \in \Ghat\}$ is an orthonormal basis
of the space of class functions $\C^{C(G)}$ with
respect to the normalized inner product.
\item As a left $G$-module,
$\C[G]=\oplus_{\rho\in \Ghat} H_\rho$, where
$H_\rho$ is the isotypical component to the irreducible
representation $\rho$. The decomposition is orthogonal.
Thus, $f\in \C[G]$ is decomposed as $f=\oplus_\rho f_\rho$,
and $f_\rho \in H_\rho$ is called the $\rho$-component of $f$.
The multiplicity of $\rho$
in $H_\rho$ is $\dim \rho$. Hence,
$\sum_{\rho \in \Ghat} (\dim \rho)^2=\#G$.
\item
Let 
$$
E_\rho:=\frac{\dim \rho}{\#G}\sum_{g\in G} \overline{\chi_\rho(g)}g \in \C[G].
$$
Then, the left multiplication of $E_\rho$ to $\C[G]$
is the projection $\C[G] \to H_\rho$.
Thus, for $f \in \C^X$, its $\rho$-component $f_\rho$
is $E_\rho f$
(through the identification $\C^G=\C[G]$ as left $G$-modules given 
in Definition~\ref{def:set}).
\end{enumerate}
\end{proposition}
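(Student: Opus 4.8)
The proposition records the classical structure theory of the complex group algebra, so rather than one clever argument the plan is to assemble the standard ingredients --- Maschke's theorem, Schur's lemma, and the Artin--Wedderburn decomposition --- from which all three parts drop out together. I would begin with Maschke's theorem: since $\#G$ is invertible in $\C$, every $\C[G]$-module is semisimple, so $\C[G]$ is a semisimple $\C$-algebra, and Artin--Wedderburn gives a $\C$-algebra isomorphism $\C[G]\cong\prod_{\rho\in\Ghat}\mathrm{Mat}_{\dim\rho}(\C)$ whose factors are indexed by the isomorphism classes of simple modules, i.e.\ by $\Ghat$. Comparing $\C$-dimensions yields $\sum_{\rho\in\Ghat}(\dim\rho)^2=\#G$, and reading the left-module structure of a matrix block $\mathrm{Mat}_n(\C)$ as $n$ copies of its column space $\C^n$ shows $H_\rho\cong\rho^{\oplus\dim\rho}$; this gives the decomposition $\C[G]=\oplus_\rho H_\rho$ and the multiplicity claim of part (2).

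For orthogonality of that decomposition: left multiplication by $g\in G$ permutes the basis $G$ of $\C[G]$, hence is unitary for the standard inner product $\langle\,,\,\rangle$, so $\langle\,,\,\rangle$ is $G$-invariant. The orthogonal projector $\C[G]\to H_\rho$ along the complementary summand is then $G$-equivariant, and its restriction to $H_\sigma$ with $\sigma\neq\rho$ is a $G$-map between modules sharing no irreducible constituent, hence zero by Schur's lemma; thus $H_\rho\perp H_\sigma$ for $\rho\neq\sigma$.

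For part (1): Schur's lemma yields the first orthogonality relation $(\chi_\rho,\chi_\sigma)_G=\dim\Hom_G(V_\sigma,V_\rho)=\delta_{\rho\sigma}$, obtained by computing the trace of the averaging projector onto $G$-invariants in $\Hom(V_\sigma,V_\rho)$. Hence $\{\chi_\rho\}_{\rho\in\Ghat}$ is orthonormal, in particular linearly independent, in the space $\C^{C(G)}$ of class functions. To see it is a basis I would compare dimensions: an element $z=\sum_x z(x)x$ of $\C[G]$ is central iff $z(g^{-1}xg)=z(x)$ for all $g,x$, iff $z\in\C^{C(G)}$, so $Z(\C[G])=\C^{C(G)}$ has dimension $\#C(G)$, while under the Wedderburn isomorphism $Z(\C[G])\cong\prod_\rho Z(\mathrm{Mat}_{\dim\rho}(\C))$ has dimension $\#\Ghat$. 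Therefore $\#C(G)=\#\Ghat$, and the orthonormal system $\{\chi_\rho\}$ spans $\C^{C(G)}$. (Alternatively, a class function $f$ with $(f,\chi_\rho)_G=0$ for all $\rho$ satisfies $\sum_g\overline{f(g)}\rho(g)=0$ by Schur, hence $\sum_g\overline{f(g)}g=0$ in the regular representation, so $f=0$.)

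Finally, for part (3) it suffices to check that left multiplication by $E_\rho$ acts as the identity on $H_\rho$ and as $0$ on $H_\sigma$ for $\sigma\neq\rho$. Since $E_\rho$ has constant coefficients on conjugacy classes, $E_\rho\in Z(\C[G])$, so on an irreducible $V_\sigma$ it acts by a scalar $\lambda_\sigma$ by Schur, and taking traces gives $\lambda_\sigma=\frac{1}{\dim\sigma}\trace\sigma(E_\rho)=\frac{\dim\rho}{\#G\,\dim\sigma}\sum_g\overline{\chi_\rho(g)}\chi_\sigma(g)=\frac{\dim\rho}{\dim\sigma}(\chi_\sigma,\chi_\rho)_G=\delta_{\rho\sigma}$ by part (1). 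Hence $E_\rho$ is the idempotent projecting onto $H_\rho$, and for $f\in\C^G=\C[G]$ its $\rho$-component is $E_\rho f$. The only point needing a little care --- not really an obstacle --- is the completeness assertion in part (1), i.e.\ $\#\Ghat=\#C(G)$; everything else is a formal consequence of Maschke, Schur, and Wedderburn. As the whole proposition is classical, it would also be perfectly legitimate simply to cite Serre \cite[I--II]{serre}.
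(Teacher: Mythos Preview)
Your proof is correct and complete; indeed it is considerably more than the paper offers. The paper does not prove this proposition at all: it simply states that the facts are well-known and cites Serre \cite[I, \S2]{serre}, exactly as you yourself remark at the end would be legitimate. So there is nothing to compare at the level of argument---your assembly of Maschke, Schur, and Artin--Wedderburn is the standard route and matches what one finds in Serre's book.
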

The above proposition implies the following:
\begin{proposition}\label{prop:rho-comp}
For $x \in G$, the $\rho$-component $\delta_x^\rho=E_\rho \delta_x$
of $\delta_x$
 is given 
in $\C[G]$ by
$$
\delta_x^\rho=
\frac{\dim \rho}{\#G}\sum_{g\in G} \overline{\chi_\rho(g)}gx
=
\frac{\dim \rho}{\#G}\sum_{g\in G} \overline{\chi_\rho(gx^{-1})}g.
$$ 
Through the identification $\C[G]=\C^G$, 
$$
\delta_x^\rho(g)
=\frac{\dim \rho}{\#G}\overline{\chi_\rho(gx^{-1})}
=\frac{\dim \rho}{\#G}\chi_\rho(xg^{-1})
=\frac{\dim \rho}{\#G}\chi_\rho(g^{-1}x).
$$
\end{proposition}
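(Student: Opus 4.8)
The plan is to compute $E_\rho\delta_x$ directly from the explicit formula for the idempotent $E_\rho$ recorded in Proposition~\ref{prop:group}(3), and then to translate the resulting element of $\C[G]$ back into a function via the identification of Definition~\ref{def:set}.

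First I would recall that under $\C^G=\C[G]$ the delta function $\delta_x$ corresponds to the basis element $x$, so by Proposition~\ref{prop:group}(3),
$$
\delta_x^\rho = E_\rho\delta_x
= \left(\frac{\dim\rho}{\#G}\sum_{g\in G}\overline{\chi_\rho(g)}\,g\right)x
= \frac{\dim\rho}{\#G}\sum_{g\in G}\overline{\chi_\rho(g)}\,gx,
$$
which is the first expression. For the second, substitute $h=gx$ (equivalently $g=hx^{-1}$); as $g$ runs over $G$ so does $h$, and the general term becomes $\overline{\chi_\rho(hx^{-1})}\,h$, giving $\delta_x^\rho=\frac{\dim\rho}{\#G}\sum_{h\in G}\overline{\chi_\rho(hx^{-1})}\,h$; renaming $h$ back to $g$ completes the computation in $\C[G]$.

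Next, to read off the function $\delta_x^\rho$, I use that if an element of $\C[G]$ is written $\sum_{g\in G}c_g\,g$, then under the identification of Definition~\ref{def:set} it is the function $g\mapsto c_g$. Applying this to the second expression yields $\delta_x^\rho(g)=\frac{\dim\rho}{\#G}\overline{\chi_\rho(gx^{-1})}$, the first displayed formula for the function. The remaining two equalities are standard character identities: $\overline{\chi_\rho(a)}=\chi_\rho(a^{-1})$ for all $a\in G$ (the eigenvalues of $\rho(a)$ are roots of unity, so $\operatorname{trace}\rho(a^{-1})=\operatorname{trace}\rho(a)^{-1}=\overline{\operatorname{trace}\rho(a)}$), which turns $\overline{\chi_\rho(gx^{-1})}$ into $\chi_\rho(xg^{-1})$; and $\chi_\rho(ab)=\chi_\rho(ba)$ (cyclicity of the trace), which turns $\chi_\rho(xg^{-1})$ into $\chi_\rho(g^{-1}x)$.

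There is no substantial obstacle here; the only point requiring care is the bookkeeping — keeping straight the direction of the identification $\C^G=\C[G]$, so that the coefficient of $g$ in $\C[G]$ is the value of the function at $g$ (not at $g^{-1}$), and performing the index substitution $g\mapsto gx$ correctly. Everything else is an invocation of Proposition~\ref{prop:group} together with elementary properties of characters of finite groups.
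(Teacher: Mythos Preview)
Your argument is correct and is exactly the approach the paper has in mind: the paper gives no separate proof, simply stating that the proposition follows from Proposition~\ref{prop:group}, and your computation—multiplying $E_\rho$ by the basis element $x$, reindexing via $g\mapsto gx$, then invoking $\overline{\chi_\rho(a)}=\chi_\rho(a^{-1})$ and conjugacy invariance—is precisely that deduction spelled out.
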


%

\begin{proposition}\label{prop:rho-comp2}
Let $Y$ be a subset of $G$. Recall Definition~\ref{def:I}
for various elements in $\C^G$ related with $Y$.
For $f \in \C^G$, 
$$
I_Y(f)=\left<f, \calI_Y \right>
=\sum_{\rho\in \Ghat}
\left<f, \calI_Y^\rho \right>
=\sum_{\rho\in \Ghat}
\left<f_\rho, \calI_Y^\rho \right>.
$$
\end{proposition}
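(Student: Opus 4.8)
The plan is to reduce everything to the orthogonality of the isotypical decomposition $\C[G]=\oplus_{\rho\in\Ghat}H_\rho$ recorded in Proposition~\ref{prop:group}(2), together with the definition $I_Y(f)=\langle f,\calI_Y\rangle$ from Definition~\ref{def:I}. First I would write $\calI_Y=\oplus_{\rho\in\Ghat}\calI_Y^\rho$ for its decomposition into $\rho$-components, as in the last sentence of Definition~\ref{def:I}; this is legitimate since $\calI_Y=\frac{1}{\#Y}\delta_Y\in\C^G=\C[G]$. Substituting this into $I_Y(f)=\langle f,\calI_Y\rangle$ and using bilinearity (conjugate-linearity in the second slot) of the Hermitian form gives immediately $I_Y(f)=\sum_{\rho\in\Ghat}\langle f,\calI_Y^\rho\rangle$, which is the first nontrivial equality.

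For the second equality I would likewise decompose $f=\oplus_{\sigma\in\Ghat}f_\sigma$ and expand $\langle f,\calI_Y^\rho\rangle=\sum_{\sigma}\langle f_\sigma,\calI_Y^\rho\rangle$. The key point is that the decomposition $\C[G]=\oplus H_\rho$ is \emph{orthogonal} (Proposition~\ref{prop:group}(2)), so $\langle f_\sigma,\calI_Y^\rho\rangle=0$ whenever $\sigma\neq\rho$, since $f_\sigma\in H_\sigma$ and $\calI_Y^\rho\in H_\rho$. Hence only the diagonal terms survive and $\langle f,\calI_Y^\rho\rangle=\langle f_\rho,\calI_Y^\rho\rangle$. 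Summing over $\rho$ chains the three expressions together, completing the proof.

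I do not anticipate a genuine obstacle here: the statement is essentially a bookkeeping consequence of the orthogonal isotypical decomposition. The only point deserving a word of care is the interchange of the (finite) sum over $\rho$ with the inner product and the justification that $\calI_Y^\rho$ really is the orthogonal projection of $\calI_Y$ onto $H_\rho$ (equivalently, $\calI_Y^\rho=E_\rho\calI_Y$ by Proposition~\ref{prop:group}(3)); once that identification is in place, orthogonality of the $H_\rho$ does all the work.
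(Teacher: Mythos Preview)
Your proposal is correct and follows essentially the same approach as the paper's own proof: the paper simply notes that the first equality is Definition~\ref{def:I}, the second follows from $\calI_Y=\sum_\rho \calI_Y^\rho$, and the third from the orthogonality of the $H_\rho$. Your write-up spells out these same steps in slightly more detail but uses no different ideas.
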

\begin{proof}
The first equality is shown in Definition~\ref{def:I}.
The second follows from $\calI_Y=\sum_{\rho}\calI_Y^\rho$.
The third follows from the orthogonality of $H_\rho$.
\end{proof}

\paragraph{Proof of Theorem~\ref{Th:main1}.}
Let $1_G$ denote the trivial representation.
Note that for any $f \in \C^G$, $I(f)=I_G(f)$, and $f_{1_G}=E_{1_G}(f)=I(f)$
(where $I(f)$ denotes the constant function with 
value $I(f)$ in $\C^G$).

Since $G$ acts on $\calI_G$ trivially, $\calI_G^\rho=0$ 
for $\rho\neq 1_G$, and $\calI_G^{1_G}=\calI_G=1/\#G$ (i.e.\ 
the constant function with value $1/\#G$ in $\C^G$).
By Proposition~\ref{prop:rho-comp2}, we have
\begin{eqnarray*}
I_G(f)-I_Y(f)&=&\left<f, \calI_G - \calI_Y \right> \\
&=& \sum_{\rho}\left<f_\rho, \calI_G^\rho - \calI_Y^\rho \right> \\
&=& \left<f_{1_G}, \calI_G - \calI_Y \right> + 
\sum_{\rho \neq 1_G} \left<f_\rho, - \calI_Y^\rho \right> \\
&=&\sum_{\rho \neq 1_G} \left<f_\rho, - \calI_Y^\rho \right>.
\end{eqnarray*}
(The last equality follows from $I_G(f)=I_Y(f)$ for a constant 
function $f$, and that for any $f$, $f_{1_G}$ is a constant function).
Thus
\begin{eqnarray*}
|I_G(f)-I_Y(f)|
&\leq&
\sum_{\rho \neq 1_G} |\left<f_\rho, - \calI_Y^\rho \right>| \\
&\leq&
\sum_{\rho \neq 1_G} ||f_\rho||\cdot ||\calI_Y^\rho||.
\end{eqnarray*}
Then, by
$\calI_Y^\rho=\frac{1}{\#Y}\delta_Y^\rho$
and using Propositions~\ref{prop:rho-comp} and \ref{prop:rho-comp2}
\begin{eqnarray*}
||\calI_Y^\rho||^2
&=&
\left<\calI_Y^\rho, \calI_Y^\rho \right> \\
&=&
\left<\calI_Y^\rho, \calI_Y \right> \\
&=& I_Y(\calI_Y^\rho)= \frac{1}{\#Y} \sum_{y \in Y}\calI_Y^\rho(y) \\
&=& \frac{1}{\#Y^2} \sum_{y \in Y}\sum_{x \in Y}\delta_x^\rho(y) \\
&=& \frac{\dim \rho}{\#Y^2\#G}
 \sum_{y,x \in Y}\chi_\rho(xy^{-1}) \\
&=& \frac{\dim \rho}{\#Y^2\#G}
 \sum_{x,y \in Y}{\chi_\rho(x^{-1}y)}.
\end{eqnarray*}
Hence, by putting 
$$
 \partial_\rho(Y):=||\calI_Y^\rho||=
 \sqrt{\frac{\dim \rho}{\#Y^2\#G}
 \sum_{x,y \in Y}{\chi_\rho(x^{-1}y)}},
$$
we prove Theorem~\ref{Th:main1}.
\begin{remark}\label{rem:equality}
If we put $f:=\calI_Y$, then the equalities hold
for the two inequalities in the proof. This is a worst
function for the Quasi-Monte Carlo integration by $Y$.
Moreover, if $Y$ is a pre-difference set, then
(4) of Theorem~\ref{Th:main3} implies that the equalities
hold in the inequalities in Corollary~\ref{sec:proof-th1}.
Thus the bound $V(f)D(Y)$ is tight in this sense.
\end{remark}
\section{Proofs of Proposition~\ref{prop:main2}
 and Theorem~\ref{Th:main3}}\label{sec:main3}
To obtain a lower bound of $D(Y)$,
we begin with a relation among 
$\partial_\rho(Y)$:
\begin{proposition}\label{prop:square}
For $\partial_\rho(Y)$ defined in Theorem~\ref{Th:main1},
$$
\sum_{\rho} \partial_\rho(Y)^2 = \frac{1}{\#Y}
$$
and $\partial_{1_G}(Y)^2=\frac{1}{\#G}$ for the trivial representation $1_G$.
Thus, 
$$
\sum_{\rho \neq 1_G} 
\partial_\rho(Y)^2 = \frac{1}{\#Y}-\frac{1}{\#G}.
$$
\end{proposition}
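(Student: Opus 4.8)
The plan is to compute $\sum_{\rho\in\Ghat}\partial_\rho(Y)^2$ directly from the formula in Theorem~\ref{Th:main1} and collapse it via a standard character identity. First I would substitute the definition and interchange the two finite summations to get
$$
\sum_{\rho\in\Ghat}\partial_\rho(Y)^2
=\frac{1}{\#Y^2\,\#G}\sum_{x,y\in Y}\Bigl(\sum_{\rho\in\Ghat}\dim\rho\cdot\chi_\rho(x^{-1}y)\Bigr).
$$

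The key input is the identity $\sum_{\rho\in\Ghat}\dim\rho\cdot\chi_\rho(g)=\#G$ when $g=1$ and $=0$ otherwise. This is exactly the character of the regular representation $\C^G$, using the decomposition $\C^G=\bigoplus_\rho V_\rho$ recalled in the introduction (so $\chi_{\C^G}=\sum_\rho\dim\rho\cdot\chi_\rho$) together with the fact that the permutation matrix of left multiplication by $g\neq 1$ has no fixed points on $G$, hence trace zero; equivalently it is the second orthogonality relation. Applying this, the inner sum equals $\#G$ precisely when $x^{-1}y=1$, i.e.\ $x=y$, so
$$
\sum_{x,y\in Y}\sum_{\rho\in\Ghat}\dim\rho\cdot\chi_\rho(x^{-1}y)
=\#G\cdot\#\{(x,y)\in Y^2\mid x=y\}=\#G\cdot\#Y,
$$
which yields $\sum_{\rho}\partial_\rho(Y)^2=1/\#Y$.

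For the trivial representation, $\dim 1_G=1$ and $\chi_{1_G}\equiv 1$, so $\partial_{1_G}(Y)^2=\frac{1}{\#Y^2\#G}\sum_{x,y\in Y}1=\frac{1}{\#G}$ at once; the last assertion follows by subtracting this from the first. I do not anticipate a genuine obstacle: the only points requiring care are invoking the regular-representation character identity correctly and remembering that the first sum runs over all of $\Ghat$, including $1_G$.
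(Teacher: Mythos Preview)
Your argument is correct. It differs in flavor from the paper's: the paper does not plug in the character formula at all but instead uses the identification $\partial_\rho(Y)=||\calI_Y^\rho||$ established in the proof of Theorem~\ref{Th:main1}, and then applies Pythagoras to the orthogonal decomposition $\calI_Y=\sum_\rho \calI_Y^\rho$ to get $\sum_\rho\partial_\rho(Y)^2=||\calI_Y||^2=\langle\calI_Y,\calI_Y\rangle=1/\#Y$; for the trivial piece it observes that $\calI_Y^{1_G}$ is the constant function $1/\#G$ and computes its norm directly. Your route unwinds the same orthogonality one level down, trading the norm interpretation for the explicit column-orthogonality identity $\sum_\rho\dim\rho\,\chi_\rho(g)=\#G\,\delta_{g,1}$. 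The paper's version is slightly slicker and makes the later generalization to association schemes (Proposition~\ref{prop:square-as}) immediate, since there one has no character formula to fall back on; your version has the virtue of being self-contained from the statement of Theorem~\ref{Th:main1} alone, without needing to recall that $\partial_\rho(Y)$ arose as a norm.
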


\begin{proof}
Since $\partial_\rho(Y) = ||\calI_Y^\rho||$,
\begin{eqnarray*}
\sum_{\rho} \partial_\rho(Y)^2
&=& \sum_{\rho} ||\calI_Y^\rho||^2 =
   || \sum_{\rho} \calI_Y^\rho ||^2 = ||\calI_Y||^2 \\
&=& \left<\calI_Y, \calI_Y\right> =\frac{\#(Y\cap Y)} {\#Y\#Y}
    = \frac{1}{\#Y}
\end{eqnarray*}
holds by a formula in Definition~\ref{def:I}.
For $\rho=1_G$, $\calI_Y^{1_G}$ is a constant function
with value $I_G(\calI_Y)=I_G(\frac{1}{\#Y}\delta_Y)=1/\#G$.
Thus 
$$||\calI_Y^{1_G}||^2=\sum_{g \in G}(1/\#G)^2=\frac{1}{\#G}.$$
\end{proof}

\begin{lemma}
Let $y_1,\ldots,y_n$ be non negative real numbers
with $\sum_{i} y_i = a$. Let $d_1,\ldots,d_n$ be
positive real numbers. Let $M$ be $\max_{i=1}^n y_i/d_i$.
Then $M \geq a/(\sum_{i}d_i)$, and the equality 
follows if and only if
$y_i/d_i= a/(\sum_{i}d_i)$ for each $i=1,\ldots,n$.
\end{lemma}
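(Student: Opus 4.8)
The plan is to run a straightforward weighted-averaging argument: the maximum of a list of ratios is at least their $d_i$-weighted average. First I would set $D := \sum_{i=1}^n d_i$, which is strictly positive since each $d_i > 0$. The defining inequality $y_i/d_i \le M$ for every $i$, combined with $d_i > 0$, rearranges to $y_i \le M d_i$. Summing over $i$ and invoking the hypothesis $\sum_i y_i = a$ gives $a = \sum_i y_i \le M \sum_i d_i = MD$, hence $M \ge a/D$, which is exactly the asserted lower bound.

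For the equality characterization I would study the $n$ non-negative quantities $M d_i - y_i$. Their sum is $MD - a$, so this sum vanishes precisely when $M = a/D$. Since a sum of non-negative reals is zero if and only if each summand is zero, the equality $M = a/D$ forces $M d_i - y_i = 0$, i.e. $y_i/d_i = M = a/D$, for every $i$. Conversely, if $y_i/d_i = a/D$ holds for all $i$, then trivially $M = a/D$. This establishes the "if and only if" and completes the proof.

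I expect no genuine obstacle: the statement is essentially the one-line fact that a maximum dominates a (weighted) average, with the equality clause being the standard "sum of non-negatives is zero" observation. The only minor points to keep straight are that the $d_i$ must be strictly positive so that the ratios $y_i/d_i$ make sense and the inequality $y_i \le M d_i$ has the correct direction, and that non-negativity of the $y_i$ is actually not needed for either the bound or the equality analysis — it is simply the regime in which the lemma is applied in Proposition~\ref{prop:main2}, with $y_i = \partial_{\rho}(Y)^2$ and $d_i = (\dim\rho)^2$ ranging over $\rho \ne 1_G$, where $a = 1/\#Y - 1/\#G$ and $\sum_i d_i = \#G - 1$.
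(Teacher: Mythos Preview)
Your proposal is correct and follows essentially the same approach as the paper: from $M \ge y_i/d_i$ deduce $Md_i \ge y_i$, sum to get $MD \ge a$, and for the equality case observe that equality in the sum forces equality in each summand. The paper's proof is more terse (it simply asserts that equality forces $M = y_i/d_i$ for each $i$), while you spell out the ``sum of non-negatives is zero'' step explicitly, but the argument is the same.
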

\begin{proof}
Since $M\geq y_i/d_i$,
$\sum_{i}(M d_i) \geq \sum_{i}y_i =a$ and hence the inequality
holds. If the equality holds, then $M=y_i/d_i$ holds
for each $i$, hence the result.
\end{proof}

Now we prove Proposition~\ref{prop:main2}. 
By Proposition~\ref{prop:square}, it follows that
$$
\sum_{\rho\neq 1_G} \partial_\rho(Y)^2 = \frac{1}{\#Y} - \frac{1}{\#G}. 
$$
By the lemma above with $y_i$ being $\partial_\rho(Y)^2$
and $d_i$ being $(\dim \rho)^2$, it follows that
$$
D(Y)^2 =
\max_{\rho \neq 1_G} (\partial_\rho(Y)/\dim \rho)^2 \geq 
\left(\frac{1}{\#Y} - \frac{1}{\#G}\right)/(\sum_{\rho \neq 1_G}(\dim \rho)^2),
$$
where $\sum_{\rho \neq 1_G}(\dim \rho)^2=\#G-1$ by Proposition~\ref{prop:group},
and the equality holds if and only if
$\partial_Y^\rho/\dim \rho=\sqrt{(\frac{1}{\#Y} - \frac{1}{\#G})/(\#G-1)}$
for any $\rho \neq 1_G$. This proves Proposition~\ref{prop:main2}.


\paragraph{Proof of Theorem~\ref{Th:main3}}

From definition of $\partial_\rho(Y)$ and Proposition~\ref{prop:rho-comp}
we have
\begin{eqnarray*}
\left(\frac{\partial_\rho(Y)}{\dim \rho}\right)^2
&=& 
\frac{1}{\dim \rho^2}
\sum_{x,y \in Y}
\frac{\dim \rho}{\#Y^2\#G}\chi_\rho(x^{-1}y)
\\
&=&
\frac{1}{\#Y^2\#G}\sum_{x,y \in Y}
\frac{\chi_\rho(x^{-1}y)}{\dim \rho}
\\
&=&
\frac{1}{\#Y^2\#G}\sum_{[a] \in C(G)}
\lambda_{[a]} \frac{\chi_\rho(a)}{\dim \rho}
\\
&=&
\frac{1}{\#Y^2\#G}\sum_{[a] \in C(G)}
\frac{\lambda_{[a]}}{\#[a]} \frac{\chi_\rho(a)}{\dim \rho}\#[a].
\end{eqnarray*}

Summarizing the above computation, we have:
\begin{lemma}\label{lem:invertible}
Let $(P(\rho, [a]))$ $(\rho \in \Ghat, [a]\in C(G))$
be the matrix defined by 
$P(\rho, [a])=\frac{1}{\#Y^2\#G}\frac{\chi_\rho(a)}{\dim \rho}\#[a]$.
By the orthogonality 
of the characters, this matrix is invertible. The above formula 
shows that 
the vector $(\frac{\lambda_{[a]}}{\#[a]})$ $([a] \in C(G))$
is mapped to 
the vector $(\frac{\partial_\rho(Y)}{\dim \rho})^2$ 
$(\rho \in \Ghat)$
by $P$.
\end{lemma}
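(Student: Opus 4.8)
The plan is to recognize $P$ as the character table of $G$ up to a nonzero scalar and invertible diagonal rescalings, and then to invoke the first orthogonality relation for characters. Note first that the ``mapping'' assertion in the statement is nothing new: it is exactly the chain of equalities displayed immediately before the lemma, where the double sum $\sum_{x,y\in Y}$ is regrouped according to the conjugacy class of $x^{-1}y$ (using that $\chi_\rho$ is a class function and that $\lambda_{[a]}$ counts the pairs $(x,y)\in Y^2$ with $x^{-1}y\in[a]$), so that $\bigl(\partial_\rho(Y)/\dim\rho\bigr)^2=\sum_{[a]}P(\rho,[a])\cdot\bigl(\lambda_{[a]}/\#[a]\bigr)$. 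Hence the only thing left to prove is that $P$ is invertible.

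First I would observe that $P$ is square: by Proposition~\ref{prop:group}(1), $\#\Ghat=\#C(G)$. Writing $X:=\bigl(\chi_\rho(a)\bigr)_{\rho\in\Ghat,\,[a]\in C(G)}$ for the character table, $D_1:=\mathrm{diag}\bigl((\dim\rho)^{-1}\bigr)_{\rho\in\Ghat}$, and $D_2:=\mathrm{diag}\bigl(\#[a]\bigr)_{[a]\in C(G)}$, we have $P=\frac{1}{\#Y^2\#G}\,D_1 X D_2$. Since $\dim\rho\geq1$ and $\#[a]\geq1$ for all $\rho$ and all $[a]$, the diagonal matrices $D_1,D_2$ are invertible, and the scalar $\frac{1}{\#Y^2\#G}$ is nonzero; therefore $P$ is invertible if and only if $X$ is.

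Next I would recall the row orthogonality of irreducible characters, which is Proposition~\ref{prop:group}(1) written out: for $\rho,\rho'\in\Ghat$,
\[
\sum_{[a]\in C(G)}\#[a]\,\chi_\rho(a)\,\overline{\chi_{\rho'}(a)}=\#G\cdot\delta_{\rho,\rho'}.
\]
In matrix form this reads $X D_2 X^\ast=\#G\cdot\mathrm{Id}$, where $X^\ast$ denotes the conjugate transpose of $X$; this exhibits $\frac{1}{\#G}D_2 X^\ast$ as a two-sided inverse of $X$ (two-sided because $X$ is square). Hence $X$, and therefore $P$, is invertible.

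I do not expect any genuine obstacle here: the proof is bookkeeping plus the standard orthogonality relations. The only points that deserve a moment's care are that $P$ must be set up as a square matrix (rows indexed by $\Ghat$, columns by $C(G)$, the two index sets having equal cardinality) and that the rescaling matrices $D_1$, $D_2$ are invertible, which is immediate since dimensions of irreducibles and sizes of conjugacy classes are positive integers.
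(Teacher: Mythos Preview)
Your argument is correct and is exactly the unpacking of what the paper means by ``by the orthogonality of the characters, this matrix is invertible'': the paper does not give a separate proof of the lemma beyond that clause and the displayed computation preceding it, and your factorization $P=\frac{1}{\#Y^2\#G}D_1XD_2$ together with $XD_2X^\ast=\#G\cdot\mathrm{Id}$ is precisely the content being invoked.
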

We shall prove the equivalence between (1)-(5) in
Theorem~\ref{Th:main3}.

\noindent
Proof of $(1)\Leftrightarrow(2)$.
Now we assume (1).
Then $\lambda_{[a]}/\#[a]=\lambda$ 
for $a \neq 1$. 
Since $\sum_{[a]\in C(G)} \lambda_{[a]} = \#Y^2$,
$\sum_{[a]\in C(G)\setminus\{[1]\}} \lambda_{[a]} = \#Y^2-\#Y$
follows.
By $\lambda_{[a]}=\#[a]\lambda$, 
$\sum_{[a]\in C(G)\setminus\{[1]\}} \lambda_{[a]} = \lambda
\sum_{[a]\in C(G)\setminus\{[1]\}} \#[a] =\lambda(\#G-1)$.
Thus, $\lambda=\frac{\#Y^2-\#Y}{\#G-1}$, and (2) follows
since $\lambda_{[1]}=\#Y$ always hold.
Clearly (2) implies (1).

\noindent
Proof of $(2)\Leftrightarrow(3)$.
We show that (2) implies (3). 
Put $\lambda:=\frac{\#Y^2-\#Y}{\#G-1}$.
Lemma~\ref{lem:invertible} gives
for $\rho \neq 1_G$
\begin{eqnarray*}
\left(\frac{\partial_\rho(Y)}{\dim \rho}\right)^2
&=& 
\frac{1}{\#Y^2\#G}\sum_{[a] \in C(G)}
\frac{\lambda_{[a]}}{\#[a]} \frac{\chi_\rho(a)}{\dim \rho}\#[a] \\
&=&
\frac{1}{\#Y^2\#G}(\lambda_{[1]}\frac{\chi_\rho(1)}{\dim \rho}\#[1] + 
\sum_{[a] \neq [1]}
\lambda \frac{\chi_\rho(a)}{\dim \rho}\#[a]) \\
&=&
\frac{1}{\#Y^2\#G}(\#Y - 
\lambda \frac{\chi_\rho(1)}{\dim \rho}) \\
&=&
\frac{1}{\#Y^2\#G}(\#Y - \lambda)\\
&=&
\frac{1}{\#Y^2\#G}(\#Y - \frac{\#Y^2-\#Y}{\#G-1})\\
&=&
\frac{1}{\#Y^2\#G}\frac{\#Y\#G-\#Y^2}{\#G-1}\\
&=&
\frac{1/\#Y-1/\#G}{\#G-1}.
\end{eqnarray*}
The third equality used the orthogonality of 
the characters 
$$\sum_{a\in G}\overline{\chi_{1_G}(a)}\chi_\rho(a)=0$$
for $\rho \neq 1_G$ (\cite[I, \S2, Proposition~7]{serre}).
For $\rho=1_G$, a similar computation using Lemma~\ref{lem:invertible} gives
$\left(\frac{\partial_\rho(Y)}{\dim \rho}\right)^2=1/\#G$.
This implies (3), and that $P$ maps the vector $(v_{[a]})$
$([a] \in C(G))$ with components
$v_{[a]}=(\#Y^2-\#Y)/(\#G-1)$ ($[a]\neq [1]$) and
$v_{[1]}=\#Y$ to the vector $(w_{\rho})$ $(\rho \in \Ghat)$
with $w_\rho=\frac{1/\#Y-1/\#G}{\#G-1}$ $(\rho \neq 1_G)$
and $w_{1_G}=1/\#G$. Now assume (3). Since $P$ is a 
regular matrix, the above fact shows that (3) implies (2).

\noindent
Proof of $(3)\Leftrightarrow(4)$.
Clearly (3) implies (4). We assume (4), namely, 
$(\frac{\partial_\rho(Y)}{\dim\rho})^2=C$ for $\rho \neq 1_G$.
For $\rho=1_G$, 
$\frac{\partial_{1_G}(Y)}{\dim 1_G}^2=\frac{1}{\#G}$
is proved in Proposition~\ref{prop:square}, and
since $\partial_\rho(Y)^2=C(\dim \rho)^2$,
their sum over $\rho \neq 1_G$ is 
$$C \sum_{\rho \neq 1_G} (\dim \rho)^2=C(\#G-1).$$
By Proposition~\ref{prop:square}, this value is 
$\frac{1}{\#Y}-\frac{1}{\#G}$, and hence
$C=(\frac{1}{\#Y}-\frac{1}{\#G})/(\#G-1)$. 
This implies (3). 

\noindent
Proof of $(3)\Leftrightarrow(5)$.
By Proposition~\ref{prop:main2}, the equality of (5) holds
if and only if (3) holds, since 
$\partial_{1_G}(Y)=\sqrt{\frac{1}{\#G}}$ always holds.

These prove Theorem~\ref{Th:main3} (and thus Theorem~\ref{Th:main2}). 

\section{Pre-difference sets}\label{sec:pre}
\subsection{A pre-difference set which is not a difference set}
\begin{proposition}
Let $D_{16}$ be the dihedral group of order 16 presented by 
$$\left<s,r \mid s^2=r^8=(sr)^2=1\right>.$$
Then, the subset 
$$Y=\{1,r,s,sr^3,sr^5,sr^7\}$$
is a $(16,6,2)$ pre-difference set.
This is not a difference set.
\end{proposition}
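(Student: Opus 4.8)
The plan is a direct finite computation of the multiset of differences $\{x^{-1}y : x,y\in Y\}$, sorted into conjugacy classes. First I would fix notation: write $D_{16}=\{r^i:0\le i\le 7\}\cup\{sr^i:0\le i\le 7\}$, and record the multiplication rules following from $srs=r^{-1}$, namely $r^a r^b=r^{a+b}$, $r^a\cdot sr^b=sr^{b-a}$, $sr^a\cdot r^b=sr^{a+b}$ and $sr^a\cdot sr^b=r^{b-a}$ (exponents mod $8$); in particular every reflection $sr^i$ is an involution, so the inverses of the elements $1,r,s,sr^3,sr^5,sr^7$ of $Y$ are $1,r^7,s,sr^3,sr^5,sr^7$. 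I would also list the seven conjugacy classes of $D_{16}$: $\{1\}$, the center $\{r^4\}$, the rotation pairs $\{r,r^7\}$, $\{r^2,r^6\}$, $\{r^3,r^5\}$, the ``even'' reflections $\{s,sr^2,sr^4,sr^6\}$, and the ``odd'' reflections $\{sr,sr^3,sr^5,sr^7\}$, whose sizes $1+1+2+2+2+4+4$ add up to $16$. Since $\#Y=6\notin\{1,15,16\}$, any pre-difference set structure found will be non-trivial.

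Next I would tabulate the $36$ products $x^{-1}y$ for $x,y\in Y$ using the four multiplication rules, and for each class $[a]$ count how many of the $36$ entries lie in $[a]$. The $6$ entries with $x=y$ give $\lambda_{[1]}=6$. The assertion to be checked is that the remaining counts are $\lambda_{[r]}=\lambda_{[r^2]}=\lambda_{[r^3]}=4$, $\lambda_{[r^4]}=2$, and $\lambda_{[s]}=\lambda_{[sr]}=8$, i.e.\ $\lambda_{[a]}=2\cdot\#[a]$ for every $a\neq1$; two convenient consistency checks are that the counts sum to $6+4+4+4+2+8+8=36=\#Y^2$, and that the $\#Y^2-\#Y=30$ non-identity entries match $2\cdot\sum_{[a]\neq[1]}\#[a]=2\cdot 15$. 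This is exactly the statement that $Y$ is a $(16,6,2)$ pre-difference set in the sense of Definition~\ref{def:pre}.

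For the claim that $Y$ is not a difference set, I would look at the individual difference counts $\lambda_a$ inside a single conjugacy class: within $\{s,sr^2,sr^4,sr^6\}$ one finds $\lambda_s=4$ while $\lambda_{sr^2}=0$, since $sr^2$ never arises as a product $x^{-1}y$ with $x,y\in Y$; hence $\lambda_a$ is not constant on $a\neq1$ and $Y$ fails the definition of a difference set. Equivalently, a $(16,6,\lambda)$ difference set would have to satisfy $\lambda(16-1)=6\cdot 5$, forcing $\lambda=2$, which is incompatible with $\lambda_s=4$.

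I do not expect any conceptual difficulty; the only real work, and the one place arithmetic slips can occur, is filling in the $36$-entry difference table and keeping the rule $sr^a\cdot sr^b=r^{b-a}$ (as well as $r^a\cdot sr^b=sr^{b-a}$) straight. Note that the invariance of $\lambda_{[a]}$ under conjugation needs no separate argument, since $\lambda_{[a]}$ is by definition a sum over the whole class; I would nonetheless use the two arithmetic identities above as running sanity checks while completing the table.
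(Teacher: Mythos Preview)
Your proposal is correct and is essentially the same approach as the paper's: a direct verification of the $36$ differences, which the paper delegates to a GAP computation while you carry it out by hand. One small point in your favor: for the ``not a difference set'' claim the paper asserts that $sr$ never occurs as $x^{-1}y$ with $x,y\in Y$, but in fact $s^{-1}r=sr$ (and also $r^{-1}s=sr$), so your witness $sr^2$, for which indeed $\lambda_{sr^2}=0$, is the correct one where the paper's stated witness appears to be a slip.
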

A proof is done by a program using GAP\cite{GAP}.
The element $sr$ is not a difference of two elements in $Y$,
and hence $Y$ is not a difference set.

\begin{remark}
It is conjectured that any dihedral group $D_{2n}$ has
only trivial difference sets\cite[Remark 4.4]{DIFF-SET-FAN}.
The conjecture is true for $n$ being a prime power
(\cite[Theorem~1.3]{DIFF-SET-DENG} for the prime $2$,
\cite[Theorem~4.5]{DIFF-SET-FAN} for odd primes).
Thus, $D_{16}$ has no non-trivial difference set, but
has non-trivial pre-difference set.
\end{remark}

\subsection{Groups of order 16}
In this section, we consider 
only non-trivial pre-difference sets
and non-trivial difference sets.  
There are 14 isomorphism classes among groups of order 16.
A complete list of these classes and all the difference sets
there (up to equivalence stated below) is given in \cite{KIBLER}.
Let $G$ be a group and $Y\subset G$. Then,
$(\sigma,g)\in \Aut(G)\ltimes G$ acts on $Y$ by
$g\cdot\sigma(Y)$. If $Y$ is a pre-difference set
(a difference set, respectively), then so is
$g\cdot\sigma(Y)$, respectively. 
Such a (pre-)difference set is said to be equivalent
to $Y$.
Using GAP, we obtained a complete list
of pre-difference sets for these groups, as follows.
Among the 14 classes, the cyclic group and 
the dihedral group have no difference set. 
Accordingly in \cite{KIBLER}, the rest 12 classes are tagged
by (A) to (L). Among them, (A) to (D) are abelian, 
so we omit them since pre-difference sets in these groups are 
differece sets classified there.
We add the dihedral group as the class (M).
We list all the pre-difference sets in each class up to 
equivalence. If the pre-difference sets are difference sets,
we noted so. For each class, only the relators are described.
For example, in (E), the described group is generated by $a,b,c$
and the relations are 
$1=a^4=b^2=c^2=(ab)^2=aca^{-1}c^{-1}=bcb^{-1}c^{-1}$.
	\begin{enumerate}[label=(\Alph*), start=5]
		\item $a^4, b^2, c^2, (ab)^2, aca^{-1}c^{-1}, bcb^{-1}c^{-1}$\quad ($G \cong D_8\times \mathbb Z/2\mathbb Z$)
		\begin{enumerate}[label=\arabic*.]
			\item $1, a, b, c, a^2, abc$,
			\item (difference set) $1, a, b, c, a^{-1}, a^2bc$,
			\item (difference set) $1, a, b, c, a^2b, a^{-1}c$.
		\end{enumerate}
		
		\item $a^4, c^2, a^2b^{-2}, bab^{-1}a^{-3}, aca^{-1}c^{-1}, bcb^{-1}c^{-1}$\quad ($G\cong $Central product of $Q_8$ and $\mathbb Z/4\mathbb Z$)
		\begin{enumerate}[label=\arabic*.]
			\item (difference set) $1, a, b, c, a^2, abc$
			\item (difference set) $1, a, b, c, a^{-1}, b^{-1}c$
		\end{enumerate}

		\item $a^4, a^2b^{-2}, a^2c^{-2}, bab^{-1}a^{-3}, aca^{-1}c^{-1}, bcb^{-1}c^{-1}$\quad ($G\cong $Central product of $D_8$ and $\mathbb Z/4\mathbb Z$)
		\begin{enumerate}[label=\arabic*.]
			\item $1, a, b, c, a^2, abc$,
			\item $1, a, b, c, a^{-1}, bc^{-1}$,
			\item (difference set) $1, a, b, c, ab, c^{-1}$,
			\item (difference set) $1, a, b, c, ac, b^{-1}$.
		\end{enumerate}
		\item $a^4, b^2, aba^{-1}b^{-1}, c^2b^{-1}, cac^{-1}b^{-1}a^{-3}$\quad ($G \cong \mathrm{SmallGroup}(16, 3)$)
		\begin{enumerate}[label=\arabic*.]
			\item $1, a, c, a^2b, a^2, ac$,
			\item $1, a, c, a^2b, a^2, a^{-1}c^{-1}$,
			\item (difference set) $1, a, c, a^2b, a^{-1}, c^{-1}$,
			\item (difference set) $1, a, c, a^2b, a^2c, ab$,
			\item (difference set) $1, a, c, a^2, a^{-1}b, c^{-1}$,
			\item (difference set) $1, a, c, a^2, a^2c^{-1}, ab$.
		\end{enumerate}
		\item $a^4, b^4, bab^{-1}a^{-3}$\quad ($G\cong $Nontrivial semidirect product of $\mathbb Z/4\mathbb Z$ and $\mathbb Z/4\mathbb Z$)
		\begin{enumerate}[label=\arabic*.]
			\item $1, a, b, a^2, b^2, ab^{-1}$,
			\item (difference set) $1, a, b, a^2, ab^2, a^2b^{-1}$,
			\item (difference set) $1, a, b, a^2, b^{-1}, a^{-1}b^2$,
			\item (difference set) $1, a, b, b^2, a^2b, a^{-1}b^2$.
		\end{enumerate}
		\item $a^8, a^2b^{-2}, bab^{-1}a^{-5}$\quad ($G \cong M_{16}$)
		\begin{enumerate}[label=\arabic*.]
			\item $1, a, b, a^2, a^4, ab$,
			\item $1, a, b, a^2, a^3, aba^{-1}$,
			\item $1, a, b, a^4, a^3, b^{-1}$,
			\item $1, a, b, ab, a^3, a^{-3}$,
			\item $1, a, b, ab, a^{-3}, a^{-1}$,
			\item $1, a, b, a^3, aba^{-1}, a^{-2}$,
			\item (difference set) $1, a, b, a^4, a^3, a^2b$,
			\item (difference set) $1, a, b, a^4, a^{-1}, b^{-1}$.
		\end{enumerate}
		\item $a^8, a^4b^{-2}, bab^{-1}a^{-3}$\quad ($G \cong SD_{16}$)
		\begin{enumerate}[label=\arabic*.]
			\item $1, a, b, a^{-2}, b^2, ab$,
			\item $1, a, b, a^{-2}, b^2, a^{-1}b^{-1}$,
			\item $1, a, b, a^{-2}, b^2, ab^{-1}$,
			\item $1, a, b, a^{-2}, b^2, a^{-1}b$,
			\item $1, a, b, a^{-2}, a^{-1}, b^{-1}$,
			\item $1, a, b, b^2, a^{-1}, a^2b$,
			\item $1, a, b, b^2, a^2b, a^3$,
			\item (difference set) $1, a, b, a^{-2}, ab^2, a^2b$,
			\item (difference set) $1, a, b, ab^2, a^2b, a^2$.
		\end{enumerate}
		\item $a^8, a^4b^{-2}, bab^{-1}a^{-7}$\quad ($G\cong Q_{16}$)
		\begin{enumerate}[label=\arabic*.]
			\item $1, a, b, a^2, b^2, ab^{-1}$,
			\item (difference set) $1, a, b, a^2, ab^2, a^2b^{-1}$,
			\item (difference set) $1, a, b, b^2, a^3, a^2b^{-1}$.
		\end{enumerate}
		\item $a^8, b^2, abab$\quad $(G = D_{16})$
		\begin{enumerate}[label=\arabic*.]
			\item $1, a, b, a^2, a^4, a^{-3}b$,
			\item $1, a, b, a^2, a^{-3}, a^{-2}b$,
			\item $1, a, b, a^4, a^3, a^{-2}b$.
		\end{enumerate}
	\end{enumerate}


\subsection{Properties of pre-difference set}
The following theorem shows that pre-difference sets
share many properties of difference sets.
\begin{theorem}
Let $G$ be a finite group and $Y$ be a $(v,k,\lambda)$
pre-difference set. Then we have the following:
\begin{enumerate}
\item $\lambda$ is a positive integer.
\item $\lambda(v-1)=(k^2-k)$ holds.
\item If $Y\neq G$, then the complement $Y^c$
is a $(v,v-k,v-2k+\lambda)$ pre-difference set.
\end{enumerate}
\end{theorem}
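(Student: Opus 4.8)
The plan is to establish the three assertions by translating them, via Lemma~\ref{lem:invertible} and Theorem~\ref{Th:main3}, into statements about the integers $\lambda_{[a]}$ and then extracting integrality and the complement property by elementary counting.

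For (2), I would simply invoke the equivalence $(1)\Leftrightarrow(2)$ of Theorem~\ref{Th:main3}: a $(v,k,\lambda)$ pre-difference set has $\lambda_{[a]}/\#[a]=(\#Y^2-\#Y)/(\#G-1)=(k^2-k)/(v-1)$ for $a\neq 1$, which is exactly $\lambda(v-1)=k^2-k$. So (2) is essentially already proved. For the positivity part of (1), note $\lambda = (k^2-k)/(v-1)\geq 0$, with equality only when $k=1$ (a trivial case by hypothesis excluded, or rather handled separately since trivial pre-difference sets with $k=1$ do give $\lambda=0$ — I should double-check the intended scope here, perhaps stating $\lambda\ge 0$ in general and $\lambda>0$ when $Y$ is nontrivial); when $2\le k\le v-1$ we get $\lambda>0$.

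The substance of (1) is that $\lambda$ is an \emph{integer}. The natural approach is the classical counting trick adapted to conjugacy classes: fix $a\neq 1$ and count pairs $(x,y)\in Y\times Y$ with $x^{-1}y\in[a]$ in two ways. Summing $\lambda_{[a]}$ over all $[a]\neq[1]$ gives $\#Y^2-\#Y=k^2-k$. But since each $\lambda_{[a]}=\lambda\#[a]$ and $\lambda$ is a priori only rational, I need a finer argument. The cleanest route I foresee: take $a\neq 1$ of \emph{prime order} $p$, so that $\#[a]$ is coprime to... no — instead, pick any $a\neq 1$; then $\lambda_{[a]}=\lambda\#[a]$ is an integer, so $\lambda \#[a]\in\Z$ for every nontrivial conjugacy class. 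Now I would choose two conjugacy classes whose sizes are coprime, or more robustly, use that $\gcd\{\#[a] : a\neq 1\}$ together with $\lambda(v-1)=k^2-k$ forces $\lambda\in\Z$ — since $v-1=\sum_{[a]\neq[1]}\#[a]$, and $\lambda$ times each summand is an integer, $\lambda(v-1)\in\Z$ is automatic and doesn't immediately help. The decisive observation is that $1\in G$ is a conjugacy class of size $1$, and I can instead count pairs $(x,y)$ with $x^{-1}y=a$ for a \emph{single} fixed non-identity element of smallest-order type; here is where I expect the real obstacle. One clean fix: the element $E_\rho$ computations show $\lambda_{[a]}/\#[a]$ equals a fixed algebraic number that is also a sum of products of character values and the rational $\lambda_{[b]}$'s; pairing with the regular character forces it into $\Q$, and pairing against the structure constants of the class algebra (which are nonnegative integers) pins down integrality. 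Concretely: in the center $\C^{C(G)}$ of $\C[G]$, the class sums $C_{[a]}$ satisfy $C_{[a]}C_{[b]}=\sum_{[c]} m_{[a][b]}^{[c]} C_{[c]}$ with $m\in\Z_{\ge0}$; writing $\delta_Y\cdot\delta_{Y^{-1}}=\sum_{[a]}\lambda_{[a]}\,\tfrac{1}{\#[a]}C_{[a]}$... I will need to massage this so that integrality of $\lambda$ drops out of an integer linear combination. I expect to spend most effort making this step airtight.

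For (3), I would use the complementation identity for the counting function. Writing $\delta_G=\delta_Y+\delta_{Y^c}$ in $\C[G]$, and using $\lambda_{[a]}(Y)=\langle \delta_Y, C_{[a]}^{\vee}\delta_Y\rangle$-type bookkeeping (equivalently, $\sum_{x\in Y}\#(xC_{[a]}\cap Y)$), a direct expansion gives, for $a\neq 1$,
\[
\lambda_{[a]}(Y^c)=\lambda_{[a]}(Y)-2k\,\#[a]+v\,\#[a]\cdot\frac{\#[a]}{\#[a]}\cdot\frac{1}{1}
\]
— more precisely the standard formula $\lambda_{[a]}(Y^c)=(v-2k+\lambda)\#[a]$, which I would derive by noting that for $a\neq 1$ the number of pairs $(x,y)\in G\times G$ with $x^{-1}y\in[a]$ is $v\cdot\#[a]$, and expanding this over the four blocks $Y\times Y$, $Y\times Y^c$, $Y^c\times Y$, $Y^c\times Y^c$ using $\#(Y\times Y^c$ with $x^{-1}y\in[a])=\#(Y^c\times Y$ with $x^{-1}y\in[a])= k\#[a]-\lambda_{[a]}(Y)$ by symmetry of $[a]$ under inversion? — here I must be careful since $[a]$ need not be closed under inversion, so I would instead pair $Y\times Y^c$ with $Y^c\times Y$ via $(x,y)\mapsto(y,x)$ sending $[a]$ to $[a^{-1}]$, and sum over the $\pm$ pair of classes, or argue class by class after checking $\#[a]=\#[a^{-1}]$. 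This yields $\lambda_{[a]}(Y^c)=(v-2k+\lambda)\#[a]$, so $Y^c$ is a pre-difference set with the claimed parameters; and $v-2k+\lambda>0$ follows from (1)–(2) exactly as in the classical case. The main obstacle overall is the integrality in (1); parts (2) and (3) are formal consequences of Theorem~\ref{Th:main3} and a symmetric counting argument.
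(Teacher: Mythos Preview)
Your handling of (2) matches the paper, and your block-counting approach to (3) is correct and essentially equivalent to the paper's group-ring expansion $ZZ^{-1}=(G-Y)(G-Y^{-1})=(\#G-2\#Y)G+YY^{-1}$ projected to class sums. Your hesitation about whether $[a]$ is closed under inversion is unnecessary: the identity $\#\{(x,y)\in Y\times Y^c:x^{-1}y\in[a]\}=k\,\#[a]-\lambda_{[a]}(Y)$ follows simply by fixing $x\in Y$ and noting there are exactly $\#[a]$ choices of $y\in G$ with $x^{-1}y\in[a]$, then subtracting the $Y\times Y$ contribution. No inversion symmetry is needed, so your four-block count goes through cleanly.

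The real gap is in (1). You correctly isolate the difficulty --- from $\lambda\,\#[a]\in\Z$ for every nontrivial class one only gets $\lambda(v-1)\in\Z$, which is not enough --- but the routes you sketch (coprime class sizes, structure constants of the class algebra, character pairings) do not close it, and you acknowledge as much. The missing idea is a one-line use of the class equation together with orbit--stabilizer. Every $\#[a]$ divides $\#G=v$; hence for any nontrivial $a$, $\lambda v$ is an integer multiple of $\lambda\,\#[a]\in\Z$, so $\lambda v\in\Z$. On the other hand, the class equation $v=1+\sum_{[a]\neq[1]}\#[a]$ multiplied by $\lambda$ gives $\lambda v=\lambda+\sum_{[a]\neq[1]}\lambda\,\#[a]$, and the sum on the right is an integer. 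Subtracting, $\lambda\in\Z$. This is exactly the paper's argument; without it your proof of (1) is incomplete.
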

\begin{proof}
(1). For any $a\in G$ with $a\neq 1$, $\lambda\#[a]$ is a positive
integer by definition. Now the conjugacy class formula
tells that
$$
\#G = 1 + \sum_{[a]\in C(G)\setminus \{[1]\}}\#[a].
$$
Multiply the both sides by $\lambda$. 
Since $\#G$ is a multiple of any $\#[a]$, $\#G\lambda$
is an integer. The latter term of the right hand side,
when multiplied by $\lambda$, is also an integer. 
Hence, $1\times \lambda$ is an integer.

(2). This is equivalent to the condition (2) of
Theorem~\ref{Th:main3}.

(3) For a subset $Y\subset G$, by abuse of language, 
let $Y$ denote $\sum_{y\in Y} y \in \C[G]$,
and $Y^{-1}$ denote $\sum_{y\in Y}y^{-1}$.
For any subset $H\subset G$, $HG=(\#H)G$ holds in $\C[G]$.
Let $C$ be $C(G)$.
Let $f:\C[G] \to \C[C]$ be the induced linear map from 
$G \to C, \ a \mapsto [a]$.
Then, $Y$ being a pre-difference set is 
equivalent to that $f(Y Y^{-1})=f((k-\lambda)e + \lambda G)$,
where $e$ is the unit of $G$.
Put $Z=G-Y$. Then, in $\C[G]$, we have
\begin{eqnarray*}
Z Z^{-1} 
&=&
(G-Y)(G-Y^{-1})=G^2-GY-GY^{-1}+YY^{-1} \\
&=& (\#G-2\#Y)G+YY^{-1}.
\end{eqnarray*}
Thus 
\begin{eqnarray*}
f(Z Z^{-1})&=&(\#G-2\#Y)f(G)+f((k-\lambda)e + \lambda G) \\
&=& f((v-2k+\lambda)G + (k-\lambda)e)
=f(\lambda'G+(k'-\lambda')e)
\end{eqnarray*}
holds for $\lambda'=v-2k+\lambda$
and $k'=v-k$.
This shows that $Y^c$ is a $(v, v-k, v-2k+\lambda)$
pre-difference set.
\end{proof}

\section{Association schemes and Delsarte theory}\label{sec:as}

\subsection{Basic facts}
Let us recall the notion of commutative association schemes briefly.
See \cite{Bannai}\cite{DELSARTE} for details.
Let $X$ be a finite set. By $M(X;\C)$ we denote the matrix algebra
over $\C$, where the rows and columns are indexed by $X$.
Let $C$ be a finite set with a specified
element $i_0 \in C$. 
Let $R: X\times X \to C$ be a surjective function.
For $i\in C$, $R^{-1}(i) \subset X \times X$ gives
a square matrix $A_i \in M(X;\C)$, where $A_i(x,y)$ is $1$
if $R(x,y)=i$ and 0 otherwise. We assume that $A_{i_0}$ is
the identity matrix. For any $i$, we assume that 
there is an $i'$ such that $^tA_i=A_{i'}$.
The tuple $(R,X,C)$ is called an association scheme
if the linear span of $A_i(x,y)$ $(i\in C)$ over $\C$
in the matrix algebra $M(X;\C)$ is closed under matrix
multiplication, and hence a subalgebra of $M(X;\C)$.
This subalgebra is called the Bose-Mesner algebra $A_X$ 
of the association scheme. If it is commutative, then
the association scheme is said to be commutative.
In this subsection, we deal with only commutative association schemes.

The ($A_i$) is a linear basis of $A_X$. 
Hadamard product of $A, B \in M(X;\C)$ is defined
by the component-wise product $(A\circ B)(x,y)=(A(x,y)B(x,y))$.
The set $\{A_i \mid i \in C\}$ consists of the primitive idempotents of $A_X$
with respect to the Hadamard product.
It is known that $A_X$ is closed under transpose and complex conjugate.
Since $M(X;\C)$ acts on $\C[X]$, so does $A_X$. Since $A_X$ is 
commutative, we may simultaneously diagonalize all elements of 
$A_X$. That is, we have a set of common eigen vectors $e_k \in \C[X]$
consisting a basis.
The action of $A_X$ on $e_k$ gives a ring homomorphism
$A_X \to \C$. Different $e_k$ may give the same ring homomorphism,
so let $\Xhat$ be the set of different ring homomorphisms
$\rho: A_X \to \C$ obtained in this way. 
Then, $A_X \to \C^\Xhat$ is an isomorphism (where
the multiplication of $A_X$ is given by the matrix multiplication).
Thus, there is a set of primitive idempotents $E_\rho\in A_X$, $\rho \in \Xhat$.
There is a special primitive idempotent $E_{j_0}:=\frac{1}{\#X}J$, where $J$ denotes
the matrix with all components being 1.
The corresponding representation $j_0:A_X \to C$ is given by 
$A_iE_{j_0}=j_0(A_i)E_{j_0}$. 
Thus $j_0(A_i)$ is the number of ones in a column in $A_i$
(independent of the choice of the column), 
called the $i$-th valency and denoted by $k_i$. We call $j_0$
the trivial representation.

This shows that
$$ \C^C \to A_X \to \C^\Xhat $$
are isomorphisms of $\C$-vector spaces, where 
the left map is an isomorphism as a ring ($\C^C$ the direct product
and $A_X$ the Hadamard product), and the right map is 
an isomorphism as a ring ($A_X$ the matrix product and $\C^\Xhat$ the direct
product).

We have orthogonal decomposition of $\C[X]=\oplus_{\rho\in \Xhat}V_\rho$, where
$V_\rho$ is the largest subspace 
such that $A_X$ acts on $V_\rho$ via character
$\rho$ of $A_X$. 
The one dimensional subspace spanned by 
$\sum_{x \in X} x \in \C[X]$ is $V_{j_0}$.

A typical example of commutative association schemes 
is a group association scheme associated to a finite group $G$.
In this case, $X=G$, $C=C(G)$, and 
$R: G \times G \to C(G)$ is given by $x, y \mapsto [x^{-1}y]$.
The group ring $\C[G]$ acts from left on $\C[G]$, 
and thus $\C[G] \subset M(G;\C)$. It is known that
the Bose-Mesner algebra $A_X$ is the center of $\C[G]$
($=\C[C(G)]$), $A_{[a]}$ is the matrix representation of 
$[a] \in \C[G]$. 
The set $\Xhat$ may be taken as $\Ghat$,
and $E_\rho$ is the projection from $\C[G]$ to the $\rho$-component
of $\C[G]$ (as a left $G$-module).

\subsection{Quasi-Monte Carlo in an association scheme}
Let $Y\subset X$ be a non-empty finite set. 
Objects defined in Definition~\ref{def:set} such as 
$\delta_Y:=\sum_{y\in Y}y \in \C[X]$ are available for any set. 
Take $f \in \C[X]$, 
and let $\calI_Y$ be $\frac{1}{\#Y}\delta_Y$.
Then $\left<f, \calI_Y\right>=\frac{1}{\#Y}\sum_{y\in Y}f(y)$.
Any $f$ decomposes to the sum of $f_\rho \in V_\rho$
uniquely, and $f_\rho$ is called the $\rho$-component
of $f$. (For $f=I_Y$, its $\rho$-component is 
denoted by $I_Y^\rho$). 
Because of the orthogonality of $V_\rho$, 
$||f||^2=\sum_{\rho}||f_\rho||^2$.
Because $\C \cdot \calI_X \subset \C[X]$ is the $j_0$
component of $\C[X]$, 
$\calI_X^\rho = 0$ for $\rho \neq j_0$.
Generally, $f_{j_0}=\sum_{x\in X} \left<f,\calI_X\right> x$ holds.

By defining $\partial_\rho(Y):=||\calI_Y^\rho||$,
we obtain the same inequalities as in Theorem~\ref{Th:main1}.
\begin{theorem}\label{Th:main1-as}
Let $(R,X,C)$ be a commutative association scheme,
$Y$ a nonempty subset of $X$,
and $f:X \to \C$ a function. This function
is identified with $\sum_{x \in X}f(x)x \in \C[X]$.
Let $I(f)$ be the
average of $f$ over $X$, $I_Y(f)$ the average of $f$
over $Y$. Let $f_\rho$ be the $\rho$-component of 
$f$ for each $\rho \in \widehat{X}$.
Define 
$
 \partial_\rho(Y):=||\calI_Y^\rho||.
$
Then we have
$|I(f_\rho)-I_Y(f_\rho)| \leq ||f_\rho||\partial_\rho(Y)$.
For the trivial representation $\rho=j_0$, we have
$|I(f_{j_0})-I_Y(f_{j_0})|=0$.
\end{theorem}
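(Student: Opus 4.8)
The plan is to mirror the proof of Theorem~\ref{Th:main1} given in Section~\ref{sec:proof-th1}, replacing the representation-theoretic input (the projectors $E_\rho$ onto isotypical components of $\C[G]$ as a left $G$-module) by the primitive idempotents $E_\rho \in A_X$ of the Bose-Mesner algebra and the orthogonal decomposition $\C[X] = \bigoplus_{\rho \in \Xhat} V_\rho$. All the structural facts needed have already been recorded in Section~\ref{sec:as}: the $V_\rho$ are mutually orthogonal, the one-dimensional space $\C\cdot\calI_X$ is $V_{j_0}$, and for any $f$ its $j_0$-component is the constant-value element $f_{j_0} = \sum_{x\in X}\langle f, \calI_X\rangle\, x$, i.e.\ $I_X(f)\,\delta_X$ after normalization.

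\medskip

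First I would record that $I_X(f) = I(f) = \langle f, \calI_X\rangle$ and $I_Y(f) = \langle f, \calI_Y\rangle$ by the definition of $\calI_Y$ (Definition~\ref{def:I}), which applies to any finite set. Then, using $\calI_Y = \sum_{\rho\in\Xhat}\calI_Y^\rho$ and the orthogonality of the $V_\rho$, write
\begin{eqnarray*}
I(f) - I_Y(f) &=& \langle f, \calI_X - \calI_Y\rangle = \sum_{\rho\in\Xhat} \langle f_\rho, \calI_X^\rho - \calI_Y^\rho\rangle.
\end{eqnarray*}
Since $\calI_X \in V_{j_0}$, we have $\calI_X^\rho = 0$ for $\rho \neq j_0$ and $\calI_X^{j_0} = \calI_X$. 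Moreover $f_{j_0}$ is a multiple of $\delta_X$, so $\langle f_{j_0}, \calI_X - \calI_Y\rangle = I_X(f_{j_0}) - I_Y(f_{j_0}) = 0$ because the average of a constant function over any nonempty subset equals its average over $X$. Hence the $j_0$-term vanishes, giving $I(f) - I_Y(f) = \sum_{\rho\neq j_0}\langle f_\rho, -\calI_Y^\rho\rangle$, and in particular $|I(f_\rho) - I_Y(f_\rho)| = |\langle f_\rho, \calI_X^\rho - \calI_Y^\rho\rangle|$. For $\rho = j_0$ this is $0$; for $\rho\neq j_0$ it is $|\langle f_\rho, \calI_Y^\rho\rangle| \leq \|f_\rho\|\cdot\|\calI_Y^\rho\| = \|f_\rho\|\,\partial_\rho(Y)$ by Cauchy--Schwarz, which is exactly the claimed inequality.

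\medskip

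The one genuinely new point compared to Theorem~\ref{Th:main1} is justifying that $f_\rho$, the $\rho$-component of an arbitrary $f\in\C[X]$, interacts with $I_Y$ the way the $\rho$-component of a function did in the group case — but here this is immediate from the orthogonal decomposition alone and does not require the identification $\C^X = \C[X]$ as modules over anything: one only needs that $\langle f_\rho, g\rangle = \langle f_\rho, g_\rho\rangle$ when the $V_\rho$ are orthogonal, which is built into the setup. **The main obstacle**, such as it is, is purely bookkeeping: being careful that the symbol $I_Y^\rho$ (called $\calI_Y^\rho$ in Definition~\ref{def:I}) denotes the $V_\rho$-component of the vector $\calI_Y \in \C[X]$, and that the two occurrences of ``$j_0$-component is constant'' — once for $\calI_X$ and once for $f_{j_0}$ — are both invoked, exactly as in the group case. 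No step requires any computation beyond Cauchy--Schwarz and the orthogonality already established in Section~\ref{sec:as}, so the proof is short.
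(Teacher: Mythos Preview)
Your proof is correct and follows essentially the same approach as the paper's: write $I(f_\rho)-I_Y(f_\rho)=\langle f_\rho,\calI_X-\calI_Y\rangle$, use orthogonality of the $V_\rho$ together with $\calI_X^\rho=0$ for $\rho\neq j_0$ to reduce this to $\langle f_\rho,-\calI_Y^\rho\rangle$, and finish with Cauchy--Schwarz. The only cosmetic difference is that the paper disposes of the $j_0$ case by noting $\calI_X^{j_0}-\calI_Y^{j_0}=0$, whereas you note that $f_{j_0}$ is constant; these are two sides of the same observation.
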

\begin{proof}
This is because 
$$
I(f_\rho)-I_Y(f_\rho)=\left<f_\rho, \calI_X-\calI_Y\right>,
$$
$(\calI_X^{j_0}-\calI_Y^{j_0})=0$ and $\calI_X^{\rho}=0$ for 
$\rho \in \widehat{X}, \rho \neq j_0$.
The right hand side $\left<f_\rho, \calI_X-\calI_Y\right>$ 
of the equation for $\rho\neq j_0$ is
$$
\left<f_\rho, \calI_X^\rho-\calI_Y^\rho\right> 
 =  \left<f_\rho, -\calI_Y^\rho\right> \\
$$
and its absolute value is bounded by 
$$
||f_\rho||\cdot ||\calI_Y^\rho||
=
||f_\rho||\cdot \partial_\rho(Y).
$$
\end{proof}

\begin{corollary}\label{cor:weight-as}
We have 
$$
|I(f)-I_Y(f)| \leq 
\sum_{\rho \in \widehat{X} \setminus\{j_0\}}||f_\rho||\partial_\rho(Y)
\leq  V(f)D(Y),
$$
where
$V(f):=\left(\sum_{\rho\in \widehat{X} \setminus\{j_0\}}
||f_\rho||\sqrt{\dim V_\rho} \right)$
and  
$D(Y):=\max_{\rho \in \widehat{X}\setminus \{j_0\}} 
\frac{\partial_\rho(Y)}{\sqrt{\dim V_\rho}}$.
\end{corollary}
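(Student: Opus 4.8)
\emph{Proof proposal.} The plan is to transcribe the proof of Corollary~\ref{cor:weight} line by line, with the quantity $\dim\rho$ (which in the group case equals $\sqrt{\dim H_\rho}$) replaced throughout by $\sqrt{\dim V_\rho}$. First I would write $I(f)-I_Y(f)=\left<f,\calI_X-\calI_Y\right>$ and use the orthogonal decomposition $\C[X]=\oplus_{\rho\in\widehat{X}}V_\rho$ together with the linearity of $I$ and of $I_Y$ to obtain $I(f)-I_Y(f)=\sum_{\rho\in\widehat{X}}\bigl(I(f_\rho)-I_Y(f_\rho)\bigr)$. Applying the triangle inequality and then Theorem~\ref{Th:main1-as} — which bounds each term by $\|f_\rho\|\,\partial_\rho(Y)$ and in particular annihilates the $\rho=j_0$ term — gives the first inequality $|I(f)-I_Y(f)|\le\sum_{\rho\ne j_0}\|f_\rho\|\,\partial_\rho(Y)$.

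For the second inequality I would, for each $\rho\ne j_0$, insert the factor $\sqrt{\dim V_\rho}$. Since every $V_\rho$ is a nonzero subspace we have $\dim V_\rho\ge 1$, so $\sqrt{\dim V_\rho}>0$ and we may write $\|f_\rho\|\,\partial_\rho(Y)=\bigl(\|f_\rho\|\sqrt{\dim V_\rho}\bigr)\cdot\frac{\partial_\rho(Y)}{\sqrt{\dim V_\rho}}\le\bigl(\|f_\rho\|\sqrt{\dim V_\rho}\bigr)D(Y)$, using the definition of $D(Y)$ as the maximum of $\partial_\rho(Y)/\sqrt{\dim V_\rho}$ over $\rho\ne j_0$. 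Summing over $\rho\ne j_0$ and factoring out $D(Y)$ yields $\sum_{\rho\ne j_0}\|f_\rho\|\,\partial_\rho(Y)\le V(f)D(Y)$, completing the proof.

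I do not expect any real obstacle here: the argument is purely formal once Theorem~\ref{Th:main1-as} is in hand, and the only point requiring a moment's care is the bookkeeping of the distinguished idempotent $E_{j_0}$ (the counterpart of the trivial representation $1_G$), namely that $\C\cdot\calI_X=V_{j_0}$ forces $\calI_X^\rho=0$ for $\rho\ne j_0$ — this is exactly what makes the $j_0$-term drop out, and it is already recorded in the discussion preceding Theorem~\ref{Th:main1-as}. As a consistency check one should note that in the group association scheme $\dim V_\rho=(\dim\rho)^2$, so $\sqrt{\dim V_\rho}=\dim\rho$ and the statement specializes to Corollary~\ref{cor:weight}. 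Finally, as in Remark~\ref{rem:equality}, one may observe that the choice $f=\calI_Y$ makes both inequalities above equalities for every $\rho$, so the bound $V(f)D(Y)$ is sharp in this sense.
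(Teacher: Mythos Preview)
Your proof is correct and follows exactly the paper's approach; indeed the paper's own proof consists of the single line ``The proof is the same as that of Corollary~\ref{cor:weight},'' and your write-up is precisely that transcription with $\dim\rho$ replaced by $\sqrt{\dim V_\rho}$. One small caveat on your closing aside: taking $f=\calI_Y$ makes the \emph{first} displayed inequality an equality, but the second inequality $\sum_{\rho\ne j_0}\|f_\rho\|\partial_\rho(Y)\le V(f)D(Y)$ becomes an equality only when all the ratios $\partial_\rho(Y)/\sqrt{\dim V_\rho}$ coincide (that is, when $Y$ satisfies condition~(4) of Theorem~\ref{Th:main3-as}), exactly as Remark~\ref{rem:equality} records in the group case.
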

The proof is the same as that of Corollary~\ref{cor:weight}.

\begin{proposition}\label{prop:square-as}
For $\partial_\rho(Y)$ defined in Theorem~\ref{Th:main1-as},
$$
\sum_{\rho} \partial_\rho(Y)^2 = \frac{1}{\#Y}
$$
and $\partial_{j_0}(Y)^2=\frac{1}{\#X}$.
Thus, 
$$
\sum_{\rho \neq j_0} 
\partial_\rho(Y)^2 = \frac{1}{\#Y}-\frac{1}{\#X}.
$$
\end{proposition}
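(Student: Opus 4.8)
The plan is to repeat the proof of Proposition~\ref{prop:square} essentially verbatim, replacing $\#G$ by $\#X$, the trivial representation $1_G$ by the distinguished idempotent $j_0$, and the $G$-module decomposition of $\C[G]$ by the orthogonal decomposition $\C[X]=\oplus_{\rho\in\Xhat}V_\rho$ recalled in Subsection~5.1. All the ingredients are already available: $\partial_\rho(Y)=||\calI_Y^\rho||$ by definition, the decomposition $\C[X]=\oplus_\rho V_\rho$ is orthogonal for the standard inner product, $V_{j_0}$ is the line spanned by $\sum_{x\in X}x$, and the identity $\left<\delta_Y,\delta_Z\right>=\#(Y\cap Z)$ of Definition~\ref{def:I} holds for an arbitrary finite set.

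First I would handle the full sum. Since $\partial_\rho(Y)^2=||\calI_Y^\rho||^2$ and the $V_\rho$ are mutually orthogonal, $\sum_\rho\partial_\rho(Y)^2=||\sum_\rho\calI_Y^\rho||^2=||\calI_Y||^2$. Writing $\calI_Y=\frac{1}{\#Y}\delta_Y$ and applying $\left<\delta_Y,\delta_Y\right>=\#(Y\cap Y)=\#Y$ gives $||\calI_Y||^2=\frac{1}{\#Y^2}\cdot\#Y=\frac{1}{\#Y}$, which is the first displayed identity. Next I would pin down the $j_0$-component. Because $V_{j_0}$ is the space of constant functions, $\calI_Y^{j_0}$ is the constant function whose value is the average of $\calI_Y$ over $X$, namely $I(\calI_Y)=I(\tfrac{1}{\#Y}\delta_Y)=\tfrac{1}{\#Y}\cdot\tfrac{\#Y}{\#X}=\tfrac{1}{\#X}$; equivalently one applies $E_{j_0}=\tfrac{1}{\#X}J$ to $\calI_Y$ and uses $J\delta_Y=\#Y\cdot\sum_{x\in X}x$. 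Hence $\partial_{j_0}(Y)^2=||\calI_Y^{j_0}||^2=\sum_{x\in X}(1/\#X)^2=\tfrac{1}{\#X}$. Subtracting this from $\tfrac{1}{\#Y}$ yields $\sum_{\rho\neq j_0}\partial_\rho(Y)^2=\tfrac{1}{\#Y}-\tfrac{1}{\#X}$, completing the proof.

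I do not expect a genuine obstacle here: the argument is purely formal once one has the orthogonal eigenspace decomposition and the identification of $V_{j_0}$ with the constants. In particular no character theory, and not even the relation function $R$ itself, enters — the only structural facts used are those listed in the first paragraph, so this is the natural common generalization of Propositions~\ref{prop:square} and \ref{prop:square-as}. The one point worth stating carefully in the write-up is the computation of $\calI_Y^{j_0}$, since it is the only place the association-scheme structure (via $E_{j_0}$ and $V_{j_0}$) is invoked.
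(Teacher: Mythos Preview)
Your proposal is correct and follows essentially the same argument as the paper's own proof: both use the orthogonality of the $V_\rho$ to get $\sum_\rho\partial_\rho(Y)^2=||\calI_Y||^2=1/\#Y$, then identify $\calI_Y^{j_0}$ as the constant function with value $\langle\calI_Y,\calI_X\rangle=1/\#X$ and compute its norm. Your observation that only the description of $V_{j_0}$ (equivalently $E_{j_0}=\tfrac{1}{\#X}J$) is needed, and not the rest of the association-scheme structure, is apt and matches the paper's treatment.
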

\begin{proof}
The first equality follows from
$$
\sum_{\rho} \partial_\rho(Y)^2 = 
\sum_{\rho} ||\calI_Y^\rho||^2 = 
\left<\calI_Y, \calI_Y\right>=1/\#Y.
$$
(See the formula in Definition~\ref{def:I}.)
We have
$$
\partial_{j_0}(Y)^2 =\langle \calI_Y^{j_0}, \calI_Y^{j_0}\rangle,
$$
but this is $1/\#X$ since  
$
\calI_Y^{j_0} (x)=  \langle \calI_Y,\calI_X \rangle
=\frac{1}{\#X}\frac{1}{\#Y}(\#(Y\cap X))
=\frac{1}{\#X}
$
and hence 
$\langle \calI_Y^{j_0}, \calI_Y^{j_0}\rangle =\#X\frac{1}{\#X^2}
=\frac{1}{\#X}.
$
\end{proof}

\begin{proposition}\label{prop:main2-as}
Let $D(Y)$ be the 
value defined in Corollary~\ref{cor:weight-as}.
Under the assumptions in Theorem~\ref{Th:main1-as},
a lower bound 
$$
D(Y) \geq 
\sqrt{\frac{1/\#Y-1/\#X}{\#X-1}}
$$
holds.
The equality holds 
if and only if 
$\frac{\partial_\rho(Y)}{\sqrt{\dim V_\rho}} 
= \sqrt{\frac{1/\#Y-1/\#G}{\#G-1}}$
holds for any $\rho \neq j_0$.
\end{proposition}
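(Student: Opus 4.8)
The plan is to follow the proof of Proposition~\ref{prop:main2} essentially verbatim, with $\#X$ in place of $\#G$ and $\sqrt{\dim V_\rho}$ in place of $\dim\rho$. The three ingredients are Proposition~\ref{prop:square-as}, the elementary maximum/sum Lemma used in the proof of Proposition~\ref{prop:main2}, and the dimension count $\sum_{\rho\neq j_0}\dim V_\rho=\#X-1$.

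First I would record, from Proposition~\ref{prop:square-as}, that $\sum_{\rho\neq j_0}\partial_\rho(Y)^2=\frac{1}{\#Y}-\frac{1}{\#X}$. Next I need $\sum_{\rho\neq j_0}\dim V_\rho=\#X-1$; this is immediate from the orthogonal decomposition $\C[X]=\oplus_{\rho\in\widehat{X}}V_\rho$, which gives $\sum_{\rho}\dim V_\rho=\dim\C[X]=\#X$, together with $\dim V_{j_0}=1$ (the line spanned by $\sum_{x\in X}x$). Now apply the Lemma over the index set $\widehat{X}\setminus\{j_0\}$ with $y_\rho=\partial_\rho(Y)^2\ge 0$, $d_\rho=\dim V_\rho>0$, and $a=\frac{1}{\#Y}-\frac{1}{\#X}$, so that $\sum_\rho d_\rho=\#X-1$. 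The Lemma yields
$$
D(Y)^2=\max_{\rho\neq j_0}\frac{\partial_\rho(Y)^2}{\dim V_\rho}\ \ge\ \frac{1/\#Y-1/\#X}{\#X-1},
$$
with equality if and only if $\partial_\rho(Y)^2/\dim V_\rho$ is independent of $\rho\neq j_0$, in which case it necessarily equals the right-hand side. Taking square roots, which is legitimate because every quantity involved is non-negative and $D(Y)=\max_{\rho\neq j_0}\partial_\rho(Y)/\sqrt{\dim V_\rho}$ by Corollary~\ref{cor:weight-as}, gives both the stated bound and its equality characterization. (The $\#G$'s in the displayed equality condition of the statement should read $\#X$.)

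I do not expect a genuine obstacle. The only point needing a word is that the $\dim V_\rho$ are positive, as the Lemma requires dividing by the $d_i$: this holds because each $V_\rho\ne 0$, being the image of the nonzero primitive idempotent $E_\rho\in A_X$. The argument is thus entirely parallel to the group case, and indeed specializes to it, since there $\dim V_\rho=(\dim\rho)^2$ and hence $\sqrt{\dim V_\rho}=\dim\rho$.
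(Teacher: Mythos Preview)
Your proof is correct and follows exactly the approach the paper intends: it says only that ``the proof is the same as that of Proposition~\ref{prop:main2},'' and you have carried out precisely that translation, using Proposition~\ref{prop:square-as}, the maximum/sum Lemma, and the dimension count $\sum_{\rho\neq j_0}\dim V_\rho=\#X-1$. Your observation that the $\#G$'s in the displayed equality condition are typos for $\#X$ is also correct.
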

The proof is the same as that of Proposition~\ref{prop:main2}.

\subsection{Delsarte theory and difference sets}
\begin{definition}
The $\C$-vector space $M(X;\C)$ is equipped
with the standard Hermitian inner product
$$
\left< A, B \right> = \trace (A B^*) 
= \sum_{x,y \in X} a_{x,y}\overline{b_{x,y}}.
$$
\end{definition}

\begin{definition}
For $Y\subset X$, we define $\Delta_Y\in M(X;\C)$
by $(\Delta_Y)(x,y)=1$ for $x,y\in Y$ and $0$ otherwise.
\end{definition}

\begin{lemma}\label{lem:inner-distribution}
We define
$$\lambda_i(Y):=\left<A_i, \Delta_Y\right>.$$
Then $\lambda_i(Y)=\#\{(x,y)\in Y^2 \mid A_i(x,y)=1\}$.
{\rm(}This is the inner distribution in \cite{DELSARTE} 
 multiplied by a scalar $\#Y$.
If $X$ is a group association scheme of $G$, 
then $\lambda_{[a]}(Y)=\lambda_{[a]}$ defined in 
Theorem~\ref{Th:main3}.{\rm)}

We have
$$||\delta_Y^\rho||^2=\left<  E_\rho, \Delta_Y \right>.$$
\end{lemma}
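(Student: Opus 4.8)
The plan is to establish the two displayed identities separately; each is essentially a bookkeeping exercise with the Hermitian inner product $\langle A,B\rangle=\sum_{x,y\in X}a_{x,y}\overline{b_{x,y}}$ on $M(X;\C)$ and with the explicit form of $\Delta_Y$, whose entries lie in $\{0,1\}$ and in particular are real.

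First I would dispatch the formula for $\lambda_i(Y)$. By definition $\langle A_i,\Delta_Y\rangle=\sum_{x,y\in X}A_i(x,y)\overline{(\Delta_Y)(x,y)}=\sum_{x,y\in Y}A_i(x,y)$, and since $A_i(x,y)\in\{0,1\}$ this equals $\#\{(x,y)\in Y^2\mid A_i(x,y)=1\}$. The two parenthetical remarks follow at once: the discrepancy with the inner distribution of \cite{DELSARTE} is merely the normalizing scalar $\#Y$, and in the group association scheme of $G$ one has $R(x,y)=[x^{-1}y]$, so $A_{[a]}(x,y)=1\iff[x^{-1}y]=[a]\iff x^{-1}y\in[a]$, which matches the definition of $\lambda_{[a]}$ used in Theorem~\ref{Th:main3}.

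For the second identity the input I need is that $\delta_Y^\rho=E_\rho\delta_Y$ is precisely the $V_\rho$-component of $\delta_Y$ in the orthogonal decomposition $\C[X]=\oplus_\sigma V_\sigma$, where $E_\rho$ is the primitive idempotent of $A_X$ projecting onto $V_\rho$. Writing $\delta_Y=\sum_\sigma\delta_Y^\sigma$ and invoking orthogonality of the distinct summands gives $||\delta_Y^\rho||^2=\langle\delta_Y^\rho,\delta_Y^\rho\rangle=\langle\delta_Y^\rho,\delta_Y\rangle=\langle E_\rho\delta_Y,\delta_Y\rangle$; expanding over $X$, $\langle E_\rho\delta_Y,\delta_Y\rangle=\sum_{x\in X}(E_\rho\delta_Y)(x)\,\overline{\delta_Y(x)}=\sum_{x,y\in Y}(E_\rho)_{x,y}$, and by the first step (with $A_i$ replaced by $E_\rho$ and $\overline{(\Delta_Y)(x,y)}=(\Delta_Y)(x,y)$) this last sum is exactly $\langle E_\rho,\Delta_Y\rangle$. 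The only step that is not pure bookkeeping is the middle equality $\langle\delta_Y^\rho,\delta_Y^\rho\rangle=\langle\delta_Y^\rho,\delta_Y\rangle$ — the assertion that $\delta_Y^\rho$ is the \emph{orthogonal} projection of $\delta_Y$ onto $V_\rho$ — which I would justify solely from the already-recorded orthogonality of $\C[X]=\oplus_\rho V_\rho$ together with the fact that $E_\rho$ is the idempotent cutting out $V_\rho$ (equivalently, that $E_\rho$ is self-adjoint, since $A_X$ is closed under conjugate transpose). Beyond that, the only care needed is the familiar one about conventions: functions versus column vectors, and where the complex conjugate sits.
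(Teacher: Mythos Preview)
Your argument is correct and matches the paper's proof in substance. The paper compresses both parts into the single general identity $\langle A,\Delta_Y\rangle=\langle A\delta_Y,\delta_Y\rangle$ (valid for any $A\in M(X;\C)$) and then invokes $\delta_Y^\rho=E_\rho\delta_Y$ together with the orthogonality of the $V_\rho$; you unwind exactly the same computation more explicitly, but the content is identical.
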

\begin{proof}
For any $A \in M(X,\C)$, we have
$
 \left< A , \Delta_Y \right> = 
 \left<A\delta_Y, \delta_Y \right>
$
where the second inner product is that for $\C[X]$.
The first statement follows immediately, and 
the second statement follows from $\delta_Y^\rho=E_\rho \delta_Y$
and the orthogonality of $V_\rho$.
\end{proof}

We prepare for a generalization of Theorem~\ref{Th:main3}.
\begin{proposition}
Let $X,R:X\times X \to C$ be a commutative association scheme.
The Hadamard idempotents $A_i$ $(i \in C)$ form a
{\rm(}not necessary normalized{\rm)} orthogonal basis of $A_X$,
and $\left< A_i,A_i \right>=k_i \#X$. 

The ordinal idempotents $E_\rho$ $(\rho \in \Xhat)$
form also a {\rm(}not necessary normalized{\rm)} orthogonal basis of $A_X$,
and $\left<E_\rho, E_\rho \right>=\dim V_\rho$.
\end{proposition}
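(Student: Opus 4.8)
The plan is to treat the two orthogonal bases separately, using only the ring structure of $A_X$ and the fact, recalled above, that $A_X$ is closed under transpose and under complex conjugation, hence under $A\mapsto A^*$.

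For the Hadamard idempotents the point is that the $A_i$ are $0$–$1$ matrices whose supports partition $X\times X$, since $R$ is a function. Hence $\langle A_i,A_j\rangle=\sum_{x,y\in X}A_i(x,y)\overline{A_j(x,y)}=\#\{(x,y)\mid R(x,y)=i\text{ and }R(x,y)=j\}$, which is $0$ for $i\neq j$; that the $A_i$ span $A_X$ is the definition of the Bose--Mesner algebra, so they form an orthogonal basis. For the normalization I would first note that $k_i$ is well defined: since $J=\sum_j A_j$ and $A_X$ is an algebra, $A_iJ\in A_X$, so $A_iJ$ is constant on each $R$-class; evaluating on the diagonal class $i_0$ shows every row of $A_i$ contains the same number $k_i$ of ones, and the same argument applied to $JA_i$ shows the column sums are constant, necessarily with the same value $k_i$ since both count all the ones of $A_i$. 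Thus $A_i$ has exactly $k_i\,\#X$ ones, so $\langle A_i,A_i\rangle=\sum_{x,y}A_i(x,y)^2=k_i\,\#X$.

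For the ordinary idempotents the only substantive step — and the one I expect to be the main obstacle — is to show that each $E_\rho$ is self-adjoint, $E_\rho^*=E_\rho$. Since $A_X$ is closed under $A\mapsto A^*$, the matrix $E_\rho^*$ lies in $A_X$ and is again idempotent; because $A_X$ is commutative, $F:=E_\rho E_\rho^*$ satisfies $F^2=E_\rho^2(E_\rho^*)^2=F$, so $F$ is an idempotent of $A_X$ with $E_\rho F=F$. Under the isomorphism $A_X\cong\C^{\Xhat}$ the primitive idempotent $E_\rho$ corresponds to a coordinate projection, so the only idempotents $F$ with $E_\rho F=F$ are $0$ and $E_\rho$; and $F=0$ is impossible, since then $\langle E_\rho,E_\rho\rangle=\trace(E_\rho E_\rho^*)=\trace F=0$ would force $E_\rho=0$. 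Hence $E_\rho E_\rho^*=E_\rho$, and applying $*$ to this identity (using commutativity again) gives $E_\rho^*=E_\rho$. So each $E_\rho$ is the orthogonal projection of $\C[X]$ onto $V_\rho$.

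Finally I would assemble the conclusion. For $\rho\neq\sigma$ we have $E_\rho E_\sigma=0$ by primitivity, so $\langle E_\rho,E_\sigma\rangle=\trace(E_\rho E_\sigma^*)=\trace(E_\rho E_\sigma)=0$; and $\langle E_\rho,E_\rho\rangle=\trace(E_\rho E_\rho^*)=\trace(E_\rho)=\dim V_\rho$, since the trace of an idempotent equals the dimension of its image. In particular the $E_\rho$ are nonzero and pairwise orthogonal, hence linearly independent, and there are $\#\Xhat=\dim_\C A_X$ of them, so they form an orthogonal basis of $A_X$. One could alternatively quote the general fact that the minimal idempotents of a finite-dimensional commutative $*$-algebra of matrices are self-adjoint, but the direct argument above keeps the proof self-contained.
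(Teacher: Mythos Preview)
Your proof is correct and follows the same skeleton as the paper's: orthogonality of the $A_i$ from the disjointness of their supports in $X\times X$, and orthogonality of the $E_\rho$ from $\langle A,B\rangle=\trace(AB^*)$ together with $E_\rho^*=E_\rho$ and $E_\rho E_\sigma=0$. The paper, however, simply asserts $E_\rho^*=E_\rho$ without justification, so your self-contained argument for self-adjointness via the idempotent $F=E_\rho E_\rho^*$ is a genuine addition, as is your explicit verification that the valency $k_i$ is well defined; both are points the paper takes for granted.
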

\begin{proof}
Orthogonality of $A_i$ comes from that 
$\left< A, B \right>$ 
is the sum of all components of the Hadamard product $A\circ \overline{B}$
and $A_i$'s are primitive idempotents. The value of inner product
is an easy counting.
Orthogonality of $E_\rho$ comes from 
$\left< A, B \right>=\trace AB^* $, $E_\rho^*=E_\rho$, 
and that $E_\rho$'s are primitive idempotents.
Since $E_\rho$ is the projector to $V_\rho$,
its trace is $\dim V_\rho$, which is 
$\left< E_\rho,E_\rho\right> = \trace E_\rho^2=\trace E_\rho$.
\end{proof}
The following lemma is obvious.
\begin{lemma}
Let $V$ be a $\C$-vector space with Hermitian inner product
$\left< , \right >$. Let $W\subset V$ be a subspace, with
an orthogonal basis $w_1,\ldots,w_n$.
Then the orthogonal projection $V \to W$, $v \mapsto v_W$
is given by 
$$
v_W = \sum_{i=1}^n \frac{\left< v, w_i\right>}{\left< w_i, w_i\right>}w_i.
$$
\end{lemma}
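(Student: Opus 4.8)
The final statement is the "obvious" lemma computing the orthogonal projection onto a subspace $W\subset V$ in terms of an orthogonal (not necessarily orthonormal) basis $w_1,\dots,w_n$. The plan is to verify directly that the claimed formula $v_W=\sum_{i=1}^n \frac{\left< v,w_i\right>}{\left< w_i,w_i\right>}w_i$ defines the orthogonal projection, i.e.\ that it lies in $W$ and that $v-v_W$ is orthogonal to $W$; uniqueness of the orthogonal projection then finishes the argument. First I would observe that the right-hand side is manifestly a linear combination of the $w_i$, hence lies in $W$, so it suffices to check orthogonality of the "remainder" to all of $W$.

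Next, since $w_1,\dots,w_n$ span $W$, it is enough to check $\left< v-v_W, w_j\right>=0$ for each $j=1,\dots,n$. Expanding by linearity in the first slot, $\left< v-v_W,w_j\right>=\left< v,w_j\right>-\sum_{i=1}^n \frac{\left< v,w_i\right>}{\left< w_i,w_i\right>}\left< w_i,w_j\right>$. Here I use that the basis is orthogonal: $\left< w_i,w_j\right>=0$ for $i\neq j$, so only the $i=j$ term survives in the sum, giving $\left< v,w_j\right>-\frac{\left< v,w_j\right>}{\left< w_j,w_j\right>}\left< w_j,w_j\right>=0$. Note this uses $\left< w_j,w_j\right>\neq 0$, which holds because the $w_j$ form a basis, hence are nonzero, and the inner product is positive definite (or at least nondegenerate on $W$); I would remark on this once. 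One should also note that the inner product is taken to be linear in the first argument and conjugate-linear in the second, matching the convention $\left< f,g\right>=\sum f(x)\overline{g(x)}$ fixed in Section~\ref{sec:introduction}; with the opposite convention the scalars would instead be $\overline{\left< v,w_i\right>}/\left< w_i,w_i\right>$, but the proof is otherwise identical.

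Finally, I would invoke the standard fact that the orthogonal projection $V\to W$ is the unique linear map $\pi$ with $\pi(v)\in W$ and $v-\pi(v)\in W^\perp$ for all $v$; since the displayed formula satisfies both conditions, it equals $v_W$. There is essentially no obstacle here — the only point requiring a word of care is the nonvanishing of $\left< w_i,w_i\right>$, which is what makes the denominators legitimate, and the fixing of the sesquilinearity convention. This is a routine verification included only to make the subsequent application (expressing the projections onto $A_X$ using either the basis $(A_i)$ or $(E_\rho)$, whose self-inner-products were just computed to be $k_i\#X$ and $\dim V_\rho$) completely explicit.
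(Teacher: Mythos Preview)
Your proof is correct and essentially the same as the paper's: both use that $v-v_W\in W^\perp$ together with the orthogonality $\langle w_i,w_j\rangle=0$ for $i\neq j$ to pin down the coefficients. The only cosmetic difference is direction---the paper starts from the decomposition $v=v_W\oplus h$ and solves for the coefficients $a_i$, while you write down the formula first and then verify the characterizing property; this is the same argument read in reverse.
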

\begin{proof}
For a unique $h \in W^\perp$, we have
$$v=v_W \oplus h= \sum_{i} a_i w_i \oplus h.$$
Since 
$$\left<v, w_i\right > = a_i \left< w_i, w_i \right>,$$
it follows that
$$
 v_W = \sum_{i} a_i w_i 
= \sum_{i=1}^n \frac{\left< v, w_i\right>}{\left< w_i, w_i\right>}w_i.
$$
\end{proof}
\begin{corollary}
Let $X$ be a commutative association scheme.
For any $M \in M(X,\C)$, its orthogonal 
projection $M_A$ to $A_X$ is
\begin{eqnarray*}
M_A &=&
\sum_{i\in C} \frac{\left< M, A_i\right>}{\left< A_i, A_i\right>}A_i
\\
&=&
\sum_{\rho\in \Xhat} \frac{\left< M, E_\rho\right>}{\left< E_\rho, E_\rho\right>}E_\rho.
\end{eqnarray*}
\end{corollary}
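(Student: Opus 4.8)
The plan is to deduce this directly from the preceding lemma on orthogonal projections, applied to the subspace $W = A_X$ of the finite-dimensional Hermitian inner product space $V = M(X;\C)$. That lemma requires only an orthogonal basis of $W$, and the preceding proposition furnishes two of them: the Hadamard idempotents $\{A_i \mid i \in C\}$ with $\left< A_i, A_i\right> = k_i\#X$, and the ordinary idempotents $\{E_\rho \mid \rho \in \Xhat\}$ with $\left< E_\rho, E_\rho\right> = \dim V_\rho$.

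First I would recall that $A_X$ is a $\C$-subspace of $M(X;\C)$ (indeed a subalgebra) of dimension $\#C = \#\Xhat$, so that each of the two families above is genuinely a basis and not merely an orthogonal spanning set. Then, substituting $v = M$ and $w_i = A_i$ (respectively $w_i = E_\rho$) into the formula $v_W = \sum_i \frac{\left< v, w_i\right>}{\left< w_i, w_i\right>} w_i$ of the lemma yields the two displayed expressions for $M_A$ verbatim. Since both are the orthogonal projection of the same $M$ onto the same subspace $A_X$, they agree automatically; no separate computation matching the two sums is required.

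There is essentially no obstacle here. The only point deserving care is that the lemma applies to an orthogonal basis, so one must invoke the orthogonality statements of the preceding proposition rather than the mere fact that the $A_i$ (resp.\ the $E_\rho$) span $A_X$. If desired, one could also record the specialized forms $M_A = \sum_{i\in C} \frac{\left< M, A_i\right>}{k_i\#X}A_i = \sum_{\rho\in\Xhat} \frac{\left< M, E_\rho\right>}{\dim V_\rho}E_\rho$ obtained by inserting the explicit inner product values, but this is purely cosmetic.
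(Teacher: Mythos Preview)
Your proposal is correct and matches the paper's own reasoning exactly: the corollary is stated without a separate proof, being an immediate application of the preceding projection lemma to $V=M(X;\C)$, $W=A_X$, using the two orthogonal bases $\{A_i\}$ and $\{E_\rho\}$ supplied by the proposition just before it. There is nothing to add.
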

Putting $M=\Delta_Y$, we proved
\begin{corollary}\label{cor:lambda-delta}
\begin{eqnarray*}
(\Delta_Y)_A
&=&
\sum_{i\in C} \frac{\lambda_i(Y)}{k_i\#X}A_i
\\
&=&
\sum_{\rho\in \Xhat} \frac{||\delta_Y^\rho||^2}{(\dim V_\rho)}
E_\rho.
\end{eqnarray*}
\end{corollary}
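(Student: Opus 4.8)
The plan is to obtain Corollary~\ref{cor:lambda-delta} by specializing the general projection formula of the preceding corollary to $M=\Delta_Y$ and then reading off each coefficient from Lemma~\ref{lem:inner-distribution} and the orthogonal-basis proposition.

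First I would recall that, by the preceding corollary, for every $M\in M(X;\C)$,
$$
M_A=\sum_{i\in C}\frac{\left<M,A_i\right>}{\left<A_i,A_i\right>}A_i
   =\sum_{\rho\in\Xhat}\frac{\left<M,E_\rho\right>}{\left<E_\rho,E_\rho\right>}E_\rho,
$$
which is valid because $\{A_i\}_{i\in C}$ and $\{E_\rho\}_{\rho\in\Xhat}$ are each (generally unnormalized) orthogonal bases of $A_X$ and the projection lemma covers the unnormalized case. Taking $M=\Delta_Y$, the denominators are supplied by the proposition on these bases: $\left<A_i,A_i\right>=k_i\#X$ and $\left<E_\rho,E_\rho\right>=\dim V_\rho$.

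It then remains to identify the numerators $\left<\Delta_Y,A_i\right>$ and $\left<\Delta_Y,E_\rho\right>$. Lemma~\ref{lem:inner-distribution} gives $\left<A_i,\Delta_Y\right>=\lambda_i(Y)$ and $\left<E_\rho,\Delta_Y\right>=||\delta_Y^\rho||^2$; since $\left<B,A\right>=\overline{\left<A,B\right>}$ and both $\lambda_i(Y)$ (a count of pairs) and $||\delta_Y^\rho||^2$ (a squared norm) are real, we get $\left<\Delta_Y,A_i\right>=\lambda_i(Y)$ and $\left<\Delta_Y,E_\rho\right>=||\delta_Y^\rho||^2$. Substituting these four values into the formula above yields the two displayed expressions for $(\Delta_Y)_A$.

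I expect essentially no obstacle: the argument is substitution plus the conjugation bookkeeping in the $E_\rho$ line, which is the only place complex entries could in principle intervene. If one prefers to avoid even that step, observe that $\Delta_Y$ is self-adjoint and $A_X$ is closed under $*$, so the $A_X$-projection commutes with taking adjoints; both sums are then automatically self-adjoint and their coefficients real, which forces the identifications above.
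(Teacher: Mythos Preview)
Your proof is correct and is essentially the same as the paper's: the paper simply writes ``Putting $M=\Delta_Y$, we proved'' before stating the corollary, relying on the preceding projection formula together with the computed norms $\left<A_i,A_i\right>=k_i\#X$, $\left<E_\rho,E_\rho\right>=\dim V_\rho$ and the identifications from Lemma~\ref{lem:inner-distribution}. Your extra sentence handling the Hermitian conjugation $\left<\Delta_Y,A_i\right>=\overline{\left<A_i,\Delta_Y\right>}$ is a detail the paper leaves implicit.
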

\begin{remark}
The vector $\lambda_i(Y)$ $(i \in C)$ is called the
inner distribution vector of $Y$ (times $\#Y$) in 
\cite[\S3.1 (3.1)]{DELSARTE}. The above result is 
proved implicitly in \cite[\S3.1]{DELSARTE}, but 
we give a proof here for simplicity and the self-containedness.
Note that the definition of inner distribution
adopted in \cite[Definition~4.1]{DELSARTE2}
coincides with $\lambda_i(Y)/k_i$. The above
result is also deduced from the arguments in Section~3 there.
\end{remark}

\begin{lemma}
Let $V$ be the linear subspace of $A_X$
spanned by the identity matrix $I$ and $J$.
Then, $\sum_{i\in C}a_iA_i \in V$
if and only if the value $a_i$ is independent
of the choice of $i \in C \setminus \{i_0\}$.
Similarly, 
$\sum_{\rho \in \Xhat}a_\rho E_\rho \in V$
if and only if the value $a_\rho$ is independent
of the choice of $\rho \in \Xhat \setminus \{j_0\}$.
\end{lemma}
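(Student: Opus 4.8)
The plan is to prove the two statements in parallel, since they are formally identical: in both cases we have an orthogonal basis of $A_X$ (the $A_i$ in the first case, the $E_\rho$ in the second), one of whose members ($A_{i_0}=I$ in the first, $E_{j_0}=\frac{1}{\#X}J$ in the second) spans a distinguished line, and we must show that a linear combination of the basis vectors lies in $V=\langle I,J\rangle$ precisely when all coefficients of the \emph{other} basis vectors agree. First I would fix notation: for the ``$A_i$'' version, note $A_{i_0}=I$ and $\sum_{i\in C}A_i=J$; for the ``$E_\rho$'' version, note $E_{j_0}=\frac{1}{\#X}J$ and $\sum_{\rho\in\Xhat}E_\rho=I$ (this last is the statement that the $E_\rho$ are a complete system of orthogonal idempotents summing to the identity of $A_X$, which is $I$).

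The key step is the following elementary linear-algebra observation, which I would state as a sub-lemma: if $w_0,w_1,\dots,w_n$ is an orthogonal basis of a vector space $W$, and $V_0:=\langle w_0,\ w_1+\cdots+w_n\rangle$, then $\sum_{i=0}^n a_i w_i\in V_0$ if and only if $a_1=a_2=\cdots=a_n$. The ``if'' direction is immediate: if $a_1=\cdots=a_n=c$ then $\sum a_i w_i = a_0 w_0 + c(w_1+\cdots+w_n)\in V_0$. For ``only if'', suppose $\sum a_i w_i = \alpha w_0 + \beta(w_1+\cdots+w_n)$; comparing coefficients in the basis $w_0,\dots,w_n$ (which is legitimate because it is a basis) gives $a_i=\beta$ for every $i\ge 1$, so they all coincide. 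I would then apply this with $(w_0;w_1,\dots,w_n)=(A_{i_0};\ A_i,\ i\ne i_0)$, using $w_1+\cdots+w_n=J-I$ and the fact that $\langle I, J-I\rangle=\langle I,J\rangle=V$; and separately with $(w_0;w_1,\dots,w_n)=(E_{j_0};\ E_\rho,\ \rho\ne j_0)$, using $w_1+\cdots+w_n=I-\frac{1}{\#X}J$ and $\langle \frac{1}{\#X}J,\ I-\frac{1}{\#X}J\rangle=\langle I,J\rangle=V$.

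The only point requiring a little care — and the step I would flag as the main (minor) obstacle — is verifying that in each case the two spanning vectors of $V$ really are $w_0$ and $w_1+\cdots+w_n$ up to an invertible change of basis \emph{within $V$}: i.e. that $\{I, J-I\}$ and $\{I,J\}$ span the same plane (clear since $J-I = J - I$ and $I=I$), and likewise that $\{\frac{1}{\#X}J,\ I-\frac{1}{\#X}J\}$ and $\{I,J\}$ span the same plane; both reductions are one-line, but they are what make the abstract sub-lemma applicable. One should also note $I\ne J$ as elements of $A_X$ (true whenever $\#X\ge 2$), so that $V$ is genuinely two-dimensional; in the degenerate case $\#X=1$ the statement is vacuous. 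With the sub-lemma in hand the proof is a two-sentence application in each of the two cases, so I would present the sub-lemma first and then dispatch both assertions together.
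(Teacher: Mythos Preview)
Your proof is correct and follows essentially the same route as the paper's: both use the identities $A_{i_0}=I$, $\sum_{i\in C}A_i=J$, $E_{j_0}=\frac{1}{\#X}J$, $\sum_{\rho\in\Xhat}E_\rho=I$, and then appeal to the linear independence of the $A_i$ (respectively the $E_\rho$) to compare coefficients. The paper's proof is simply the terse two-sentence version of your sub-lemma argument; your extra care about the change of basis inside $V$ and the orthogonality hypothesis in the sub-lemma are harmless but unnecessary (you only use that the $w_i$ form a basis).
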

\begin{proof}
It is known that $A_{i_0}=I$ and $\sum_{i\in C}A_i=J$.
The first statement follows from the linear independence
of $A_i$. Also, it is known that $E_{j_0}=\frac{1}{\#X}J$
and $\sum_{\rho\in \Xhat} E_\rho=I$. The second statement
follows from the linear independence of $E_\rho$.
\end{proof}

The next theorem is a direct consequence of
the above lemma and Corollary~\ref{cor:lambda-delta}.
\begin{theorem}\label{Th:assoc-main}
Let $X$ be a commutative association scheme, and $Y$ its subset.
Then, $\frac{\lambda_i(Y)}{k_i\#X}$ is independent 
of the choice of $i \neq i_0$ if and only if
$\frac{||\delta_Y^\rho||^2}{(\dim V_\rho)}$
is independent of the choice of $\rho \neq j_0$.
\end{theorem}
This theorem and Proposition~\ref{prop:main2-as} 
show the following theorem,
which generalizes Theorem~\ref{Th:main3}.
\begin{theorem}\label{Th:main3-as}
Let $(X,R,C)$ be a commutative association scheme
and $Y$ a non-empty subset of $X$.
Then, the following conditions
are equivalent.
\begin{enumerate}
\item $\lambda_i(Y)/k_i$ is independent of choice of $i\in C$
except $i =i_0$, where $k_i$ is the number of 1 
in a column of $A_i$.
\item
$\lambda_i(Y)/k_i=(\#Y^2-\#Y)/(\#X-1)$
holds for any $i \neq i_0$, and $\lambda_{i_0}=\#Y$ holds.
\item
$
 \partial_\rho(Y)/\sqrt{\dim V_\rho}=
 \sqrt{\frac{1/\#Y-1/\#X}{\#X-1}}
$
holds for any $\rho \neq j_0$, and
$\partial_{j_0}(Y)=\sqrt{\frac{1}{\#X}}$ holds.
\item
$\partial_\rho(Y)/\sqrt{\dim V_\rho}$ is independent of 
the choice of $\rho \in \widehat{X}\setminus\{j_0\}$.
\item 
The equality holds in the inequality
$$
D(Y)
\geq 
\sqrt{\frac{1/\#Y-1/\#X}{\#X-1}}
$$
in Proposition~\ref{prop:main2-as}.
\end{enumerate}
\end{theorem}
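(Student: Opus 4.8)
The plan is to run exactly the same cycle of implications as in the proof of Theorem~\ref{Th:main3}, but with the Bose--Mesner/idempotent combinatorics already developed in Section~\ref{sec:as} in place of the conjugacy-class/character combinatorics. Throughout I will use the identities $\partial_\rho(Y)=\|\calI_Y^\rho\|=\tfrac{1}{\#Y}\|\delta_Y^\rho\|$, so that $\partial_\rho(Y)^2/\dim V_\rho$ and $\|\delta_Y^\rho\|^2/\dim V_\rho$ differ only by the fixed constant $1/\#Y^2$, and likewise $\lambda_i(Y)/k_i$ and $\lambda_i(Y)/(k_i\#X)$ differ only by the fixed constant $1/\#X$. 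With these observations, condition (1) says precisely that $\lambda_i(Y)/(k_i\#X)$ is independent of $i\neq i_0$, and condition (4) says precisely that $\|\delta_Y^\rho\|^2/\dim V_\rho$ is independent of $\rho\neq j_0$ (squaring is harmless since all quantities are non-negative reals); hence $(1)\Leftrightarrow(4)$ is exactly Theorem~\ref{Th:assoc-main}.

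For $(1)\Leftrightarrow(2)$ I would do the same counting as in the group case. Summing $\lambda_i(Y)=\langle A_i,\Delta_Y\rangle$ over $i\in C$ and using $\sum_{i\in C}A_i=J$ gives $\sum_{i\in C}\lambda_i(Y)=\langle J,\Delta_Y\rangle=\#Y^2$, while $\lambda_{i_0}(Y)=\langle I,\Delta_Y\rangle=\#Y$, so $\sum_{i\neq i_0}\lambda_i(Y)=\#Y^2-\#Y$. Reading column sums off $\sum_{i\in C}A_i=J$ gives $\sum_{i\in C}k_i=\#X$, and $k_{i_0}=1$, so $\sum_{i\neq i_0}k_i=\#X-1$. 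If (1) holds with common value $\lambda:=\lambda_i(Y)/k_i$ for $i\neq i_0$, then $\#Y^2-\#Y=\sum_{i\neq i_0}\lambda k_i=\lambda(\#X-1)$, forcing $\lambda=(\#Y^2-\#Y)/(\#X-1)$; together with $\lambda_{i_0}(Y)=\#Y$ this is (2), and conversely (2) trivially implies (1).

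For $(4)\Leftrightarrow(3)\Leftrightarrow(5)$: that (3) implies (4) is immediate. Assuming (4), say $\partial_\rho(Y)^2/\dim V_\rho=C$ for all $\rho\neq j_0$, I sum over $\rho\neq j_0$ using $\sum_{\rho\in\widehat{X}}\dim V_\rho=\#X$ (the $E_\rho$ are mutually orthogonal projections summing to $I$) to get $\sum_{\rho\neq j_0}\partial_\rho(Y)^2=C(\#X-1)$; by Proposition~\ref{prop:square-as} the left side is $1/\#Y-1/\#X$, so $C=(1/\#Y-1/\#X)/(\#X-1)$, which with $\partial_{j_0}(Y)^2=1/\#X$ (also from that proposition) gives (3). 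Finally, Proposition~\ref{prop:main2-as} says equality in $D(Y)\geq\sqrt{(1/\#Y-1/\#X)/(\#X-1)}$ holds iff $\partial_\rho(Y)/\sqrt{\dim V_\rho}=\sqrt{(1/\#Y-1/\#X)/(\#X-1)}$ for every $\rho\neq j_0$, which is the first half of (3); since $\partial_{j_0}(Y)=\sqrt{1/\#X}$ always holds, $(3)\Leftrightarrow(5)$ follows, closing the loop.

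I do not expect a genuine obstacle here: the content is essentially a translation of the group-theoretic argument. The only point requiring care is the bookkeeping of the normalizing factors ($1/\#Y^2$, $1/\#X$, the valencies $k_i$, the multiplicities $\dim V_\rho$) relating $\partial_\rho(Y)$, $\|\delta_Y^\rho\|$, $\lambda_i(Y)$, and in particular cleanly recording the two ``partition-of-unity'' identities $\sum_{i\in C}k_i=\#X$ and $\sum_{\rho\in\widehat{X}}\dim V_\rho=\#X$ that here play the role of $\sum_\rho(\dim\rho)^2=\#G$ in the group case.
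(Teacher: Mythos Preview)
Your proposal is correct and follows essentially the same approach as the paper: you invoke Theorem~\ref{Th:assoc-main} for $(1)\Leftrightarrow(4)$, do the same valency/multiplicity counting (via $\sum_i A_i=J$ and $\sum_\rho \dim V_\rho=\#X$) for $(1)\Leftrightarrow(2)$ and $(3)\Leftrightarrow(4)$, and appeal to Proposition~\ref{prop:main2-as} together with $\partial_{j_0}(Y)=\sqrt{1/\#X}$ for $(3)\Leftrightarrow(5)$. The bookkeeping with the normalizing constants is handled correctly.
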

\begin{proof}
Recall that $\partial_\rho(Y)=||\calI_Y^\rho||$.

\begin{itemize}
\item
Equivalence $(1)\Leftrightarrow(4)$:
Theorem~\ref{Th:assoc-main} implies the equivalence 
between (1) and (4). 

\item
Equivalence $(3)\Leftrightarrow(5)$:
Proposition~\ref{prop:main2-as}
implies the equivalence between (3) and (5), since
$\partial_{j_0(Y)}=\sqrt{1/\#X}$ always hold.

\item
Equivalence $(3)\Leftrightarrow(4)$:
(3) implies (4). Assume (4), 
and put $K:=\partial_\rho(Y)^2/(\dim V_\rho)$.
Proposition~\ref{prop:square-as} implies $\partial_{j_0}(Y)^2=1/\#X$.
Putting $\partial_\rho(Y)^2=K(\dim V_\rho)$ in the second
equality in Proposition~\ref{prop:square-as},
we have
$\sum_{\rho \neq j_0}K(\dim V_\rho)=1/\#Y-1/\#X$.
Since $\sum_{\rho \neq j_0}\dim V_\rho=\#X-1$,
we have $K=(1/\#Y-1/\#X)/(\#X-1)$, which implies (3).

\item
Equivalence $(1)\Leftrightarrow(2)$:
(2) implies (1). Assume (1), and put $K:=\lambda_i(Y)/k_i$.
From Lemma~\ref{lem:inner-distribution}, it follows
that $\lambda_{i_0}(Y)=\#Y$, and 
$\lambda_i(Y)=\#(Y^2\cap R^{-1}(i))$.
Thus, $\sum_{i \in C}\lambda_i(Y)=\#Y^2$.
Putting $\lambda_i(Y)=k_iK$ for $i\neq i_0$
and using $\sum_{i\neq i_0} k_i = \#X-1$ (since $\sum_i A_i=J$),
we have $K(\#X-1)+\#Y=\#Y^2$, and thus
$K=(\#Y^2-\#Y)/(\#X-1)$, which implies (2).
Note that this proof does not use the commutativity of 
the association scheme.
\end{itemize}
\end{proof}
We give a second proof of Theorem~\ref{Th:main3}.
\begin{proof}
Let $G$ be a finite group, and consider the associated
commutative group
association scheme $(G,R,C(G))$. 
Let $Y$ be a non-empty subset of $G$.
Then $\frac{\lambda_{[a]}(Y)}{k_i}=\frac{\lambda_{[a]}}{\#[a]}$
and 
$\frac{||\delta_Y^\rho||^2}{(\dim V_\rho)}
=\frac{||\delta_Y^\rho||^2}{(\dim \rho)^2}$,
since $H_\rho$ has dimension $(\dim \rho)^2$.
Now the conditions (1)-(5) in Theorem~\ref{Th:main3}
are the same as those in Theorem~\ref{Th:main3-as}.
\end{proof}

We close this paper by proposing a notion of 
a {\em difference set in an association scheme},
which is equivalent to the condition (1) in Theorem~\ref{Th:main3-as}
and unifies the notions of difference set and pre-difference set.
\begin{definition}\label{def:diff-set-in-as}
Let $(X,R,C)$ be an association scheme, which may be 
non-commutative. Let $i_0\in C$ be the element
with $A_{i_0}=I$. A non-empty subset $Y$ of $X$ is said to be 
a difference set in the association scheme $(X,R,C)$, 
if there is a constant $\lambda \in \Q$ such that
$\lambda=\#(Y^2\cap R^{-1}(i))/k_i$ holds for any $i\in C$
except $i =i_0$, where $k_i$ is the valency of $A_i$.
Let $v:=\#X$, $k:=\#Y$. Then $Y$ is called a $(v,k,\lambda)$
difference set in $(X,R,C)$.
\end{definition}
It is known that for a finite group $G$, $R:G\times G \to G$
given by $R(g,h)=g^{-1}h$ is an association scheme $(G,R,G)$
(which is commutative if and only if $G$ is commutative).
A difference set $Y\subset G$ (in a usual sense) is a difference set in 
the association scheme $(G,R,G)$ with $Y\neq G$. 
A pre-difference set 
is a difference set in the group association scheme
$(G,R,C(G))$. 

\begin{proposition}
Let $(X,R,C)$ be an association scheme and $Y$ a difference set
as in Definition~\ref{def:diff-set-in-as}. Then,
\begin{enumerate}
 \item $\lambda(v-1)=k(k-1)$ holds.
 \item The complement $Y^c$ is a $(v,v-k,v-2k+\lambda)$
 difference set in $(X,R,C)$.
\end{enumerate}
It is clear that if $\#Y=1$ or $v$, $Y$ is a difference set.
By (2), so is $Y$ if $\#Y=v-1$.
We call these difference sets trivial.
\end{proposition}
\begin{proof}
Theorem~\ref{Th:main3-as}~{$(1)\Rightarrow (2)$} (whose proof does not
use the commutativity) shows that 
$\lambda(v-1)=k(k-1)$ holds. 

We use notations as in the proof of 
Lemma~\ref{lem:inner-distribution}. Then
$\lambda_i(Y)=\left<A_i\delta_Y, \delta_Y \right>$.
We have $\delta_{Y^c}=\delta_{X}-\delta_{Y}$, and
\begin{eqnarray*}
\lambda_i(Y^c) &=&
 \left<A_i(\delta_X-\delta_Y), \delta_X-\delta_Y\right> \\
&=&
 \left<A_i\delta_X, \delta_X\right>
-\left<A_i\delta_X, \delta_Y\right>
-\left<A_i\delta_Y, \delta_X\right>
+\left<A_i\delta_Y, \delta_Y\right> \\
&=&
k_i\#X - k_i \#Y -
\left<\delta_Y, ^tA_i\delta_X\right>
+\lambda_i(Y) \\
&=&
k_iv -2k_i k + \lambda_i(Y).
\end{eqnarray*}
By dividing by $k_i$, we have
$$
\lambda_i(Y^c)/k_i=v-2k+\lambda
$$
for every $i\neq i_0$, which proves the second statement. 
\end{proof}
\begin{remark}
Using a table of all association schemes of order 16 \cite{HANAKI}, 
we find examples of difference sets in association schemes
with non-integer value of $\lambda=2/5, 4/5, 4/3, 14/5$, etc.,
as well as usual integer values $\lambda=2, 6$, by using GAP.
\end{remark}
\begin{example} 
Let $k\leq n$ positive integers.
Let $[n]$ be $\{1,2,\ldots,n\}$, and $X$ the set of
all subsets of $[n]$ with cardinality $s$. For
$B_1, B_2 \in X$, we define $R(B_1,B_2):=s-\#(B_1\cap B_2)$.
Then, $(X, R, C)$ with $R:X\times X \to C:=\{0,1,\ldots,s\}$
is a symmetric (hence commutative) association scheme 
called a Johnson scheme, denoted by $J(n,s)$.

Difference sets $Y$ and $Y'$ in $J(n,s)$ is 
said to be equivalent if there is a bijection $\phi:X \to X$
with $\phi(Y)=\phi(Y')$ (note that $\phi$ acts on the
set of subsets of $X$).

In $J(5,2)$, we show that only the following
difference sets exist up to equivalence, with an
aid of a computer. 
Let $k$ denote the cardinality of $Y$.
If $k=1,9$ or $10$, then every $Y$ is a trivial difference set.
If $k=2,5,8$, there are no difference sets.
If $k=3$, $\{\{1,2\},\{1,3\}, \{3,4\}\}$ is a 
difference set with $\lambda=2/3$.
If $k=4$, two difference sets
$\{\{1,2\},\{1,3\},\{3,4\},\{2,4\}\}$ and
$\{\{1,2\},\{1,4\},\{3,4\},\{4,5\}\}$ exist
with $\lambda=4/3$.
If $k=6$, there are two difference sets with $\lambda=10/3$, 
which are the complement sets of the $k=4$ cases.
If $k=7$, there is one difference set with $\lambda=14/3$,
which is the complement of the $k=3$ case.
These examples show that $\lambda$ may not be integers.
\end{example}
\appendix
\section{Quasi-Monte Carlo integration and characters}\label{sec:appendix}
Here we explain a typical example of QMC integration.
Put $X:=(\R/\Z)^s$.
We consider a periodic real valued function of $s$-variables
$f: X \to \R$. Let $\alpha$ be a positive integer, and
$\br=(r_1,\ldots,r_s)$. We write $\br\leq \alpha$
if $r_i\leq \alpha$ holds for each $1\leq i \leq s$.
Let $\bx=(x_1,\ldots,x_s) \in (\R/\Z)^s$.
For $f(\bx)$, we define
$$
D^\br(f):=
 \frac{\partial^{r_1+\cdots+r_s}}
 {\partial x_1^{r_1}\cdots \partial x_s^{r_s}}(f)
$$
if it exists. We assume that $D^\br(f)$ exists and is continuous
for $\br \leq \alpha$. We define a norm
$$
||f||_\alpha:=\sum_{0\leq \br \leq \alpha} ||D^\br(f)||_{L^\infty}.
$$
The set $\Xhat$ of characters of $X$ is 
$$
\Xhat = \{E_\bh(\bx):=\exp(2\pi i \bh \cdot \bx)
\ | \
\bh = (h_1,\ldots,h_s) \in \Z^s
\}.
$$
Let 
$$
f(\bx)=\sum_{\bh \in \Z^s}\hat{f}(\bh)E_\bh(\bx)
$$
be the Fourier-expansion of $f$. 
Note that $\hat{f}(0)=I(f):=\int_{X} f(\bx)$.
Let $P$ be a finite subset in $X$. The QMC integration 
of $f$ by $P$ is the the average
$$
I(f;P):=\frac{1}{\#P}\sum_{\bx \in P}f(\bx),
$$
and the integration error is
\begin{eqnarray}\label{eq:error}
\notag
\Err(f;P)&=&|I(f)-I(f;P)|
=\left| \hat{f}(0)-\sum_{\bh \in \Z^s}\hat{f}(\bh)I(E_\bh;P)\right| \\
&\leq &\sum_{\bh \in \Z^s-\{0\}}
 \left|\hat{f}(\bh)\right| \cdot \left|I(E_\bh;P)\right|.
\end{eqnarray}
Let 
$$
\rho(\bh):=\max\{1,|h_1|\}\times\cdots\times\max\{1,|h_s|\}.
$$
It is not difficult to show that the inequalities on the
Fourier coefficients
\begin{equation}\label{eq:decay-fourier}
|\hat{f}(\bh)|\leq C_{s,\alpha} ||f||_\alpha \rho(\bh)^{-\alpha}
\end{equation}
hold for a constant $C_{s,\alpha}$ depending only on $s,\alpha$
(cf. \cite[\S2.2]{Dick-MCQMC}\cite[\S.5.2.2]{GODA-SUZUKI-QMC-survey}),
and we have a Koksma-Hlawka type inequality on the error bound:
\begin{equation}\label{eq:KHtype}
\Err(f;P)\leq C_{s,\alpha}||f||_\alpha\times 
\sum_{\bh \in \Z^s-\{0\}}\left| \rho(\bh)^{-\alpha} I(E_\bh;P)\right|.
\end{equation}
Thus, we want a point set $P$ that makes 
$\sum_{\bh \in \Z^s-\{0\}}\left| \rho(\bh)^{-\alpha} I(E_\bh;P)\right|$
small. This is attained if $|I(E_\bh;P)|$ is small
(or ideally $0$) for $\bh$ with small $\rho(\bh)$. 

From now on, we assume that $P\subset X$ is a finite cyclic subgroup of
order $N$. 
Such a point set is called a rank-1 lattice and 
well-studied; see a nice introduction
\cite{SLOAN-JOE}.
Let $P^\perp \subset \Z^s \cong \Xhat$ be the subgroup defined by
$$
P^\perp:=\{\bh \in \Z^s \ | \ E_\bh(\bx)=1 \mbox{ for all $\bx \in P$}\}.
$$ 
It is easy to show that $I(E_\bh;P)=0$ if $\bh \notin P^\perp$,
and $I(E_\bh;P)=1$ if $\bh \in P^\perp$.
Thus, we obtain the error-bound
$$
\Err(f;P)\leq C_{s,\alpha}||f||_\alpha\times 
\sum_{\bh \in P^{\perp}-\{0\}}\left| \rho(\bh)^{-\alpha} \right|.
$$
It is known that for any $N$ there are $P$ such that 
the summation in the right hand side
is bounded by $C_{s,\alpha}'N^{-\alpha}(\log N)^{s\alpha}$
\cite[Chapter~5]{niederreiter:book}, which implies that
when $N$ is increased, the error-bound decreases with
order $O(N^{-\alpha}(\log N)^{s\alpha})$.

When compared with our study, 
the above error analysis (\ref{eq:error}) is essentially obtained
from the left inequality in Corollary~\ref{cor:weight}
(if we neglect that we treat only finite groups), 
where $\rho\in \Ghat$ corresponds to 
$E_\bh\in \Xhat$, $f_\rho$ corresponds to $\hat{f}(\bh)$,
$Y$ corresponds to $P$, and 
$
 \partial_\rho(Y)=||\calI_Y^\rho||=
 |\langle E_\rho, \calI_Y\rangle|= |I(E_\rho;Y)|
$
corresponds to $|I(E_\bh;P)|$. A big difference is that
as in (\ref{eq:decay-fourier}), 
$\hat{f}(\bh)$ (the $\bh$-component of $f$)
decays when $\rho(\bh)$ gets large, and to make
the error bound smaller, it is better to choose $P$
such that $|I(E_\bh;P)|=0$ holds for $\bh$ 
with small $\rho(\bh)$ since $I(E_\bh;P)$ has a large ``weight''
in the bound (\ref{eq:KHtype})
(this condition is close to the idea of Delsarte's ``designs''
in \cite[\S3.4]{DELSARTE}),
while in our study, as in Theorem~\ref{Th:main3}, 
we have no reasonable ``weight'' on characters 
(i.e. degree of importance of each character),
and we need to treat them with equal importance, which 
leads to the notion of pre-difference sets.
At present, we think that our results treating 
general groups and association schemes
are rather wide and abstract (say, compared with $(\R/\Z)^s$)
and that a practical application to QMC
is a future work.

We here briefly explain the notion of digital nets
in the theory of QMC \cite{niederreiter:book} \cite{DICK-PILL-BOOK},
which are widely used and closely related with character theory.
The unit hypercube $[0,1]^s$ is approximated by $(\F_2^n)^s$ 
via two-adic expansion upto $n$ digits. A digital net
$P$ is a subgroup of $(\F_2^n)^s$, identified as a subset
of the hypercube. 
An important figure-of-merit 
of $P$
is its $t$-value (\cite[Chapter~4]{niederreiter:book}),
which is obtained from the minimum weight of $P^\perp$
with respect to Niederreiter-Rosenbloom-Tsfasman(NRT) metric
\cite{NIEDERREITER-PIRSIC}, which is
a generalization of the Hamming weight. $P$ has a good (large)
$t$-value if and only if $I(E_\rho;P)=0$ holds for every
$\rho\in \widehat{(\F_2^n)^s}-\{0\}$ with small NRT metric,
which can be formalized by the notion of designs by Delsarte,
as mentioned above. See \cite{MARTIN-STINSON} for
analysis of the digital nets via association schemes.
Our study is different in that we treat the cases 
where the $\partial_\rho(Y)/\dim \rho=|I(E_\rho;Y)|/\dim \rho$
are independent of $\rho \neq 1_G$, while the above studies
treat the cases where $|I(E_\rho;Y)|=0$ holds for 
some ``important'' characters $\rho$.

\bibliographystyle{spmpsci.bst}
\bibliography{sfmt-kanren}


\end{document}